\newcommand{\supp}{\text{supp}}
\newcommand{\I}{\mathcal{I}}
\newcommand{\intone}{\int_{z}^{\bar{z}}}
\newcommand{\inttwo}{\int_0^z}
\newcommand{\intI}{\int_{\mathcal{I}}}
\newcommand{\ti}{\tilde}
\newcommand{\al}{\alpha}
\newcommand{\be}{\beta}
\newcommand{\ga}{\gamma}
\newcommand {\sg} {\sigma} 
\newcommand {\vi}{\varphi} 
\newcommand{\la}{\label}
\newcommand{\p}{\partial}
\newcommand{\re}{\eqref}
\newcommand{\R}{\mathbb{R}}
\newcommand {\f}   {\frac}
\newcommand{\beq}{\begin{equation}}
\newcommand{\eeq}{\end{equation}}
\newtheorem{thm}{Theorem}[section]
\newtheorem{prop}{Proposition}[section]
\newtheorem{lem}[thm]{Lemma}
\newtheorem{rem}[thm]{Remark}
 \title{On a Boltzmann mean field model for knowledge growth} 
\author{Martin Burger}
\address{Institute for Computational and Applied Mathematics, University of M\"unster, Einsteinstrasse 62, 48149 M\"unster, Germany}
\email{martin.burger@wwu.de} 
\author{Alexander Lorz}
\address{Sorbonne Universit\'es, UPMC Univ Paris 06, UMR 7598, Laboratoire Jacques-Louis Lions, F-75005, Paris, France; CNRS, UMR 7598, Laboratoire Jacques-Louis Lions, F-75005, Paris, France; INRIA-Paris-Rocquencourt, EPC MAMBA, Domaine de Voluceau, BP105, 78153 Le Chesnay Cedex}
\email{alexander.lorz@upmc.fr} 
\author{Marie-Therese Wolfram}
\address{Radon Institute for Computational and Applied Mathematics, Austrian Academy of Sciences, Altenbergerstr. 69, 4040 Linz, Austria}
\email{mt.wolfram@ricam.oeaw.ac.at}
\begin{document}
\maketitle

\begin{abstract}
In this paper we analyze a Boltzmann type mean field game model for knowledge growth, which was proposed by
Lucas and Moll \cite{LM2013}. We discuss the underlying mathematical model, which
consists of a coupled system of a Boltzmann type equation for the agent density and a Hamilton-Jacobi-Bellman equation for
the optimal strategy. We study the analytic features of each equation separately and show local in time existence and uniqueness for
the fully coupled system. Furthermore we focus on the construction and existence of special solutions, which relate to exponential growth
in time - so called balanced growth path solutions. Finally we illustrate the behavior
of solutions for the full system and the balanced growth path equations with numerical simulations.\\
\end{abstract}

\pagestyle{myheadings}
\thispagestyle{plain}
\markboth{M.~BURGER AND A. LORZ AND M.T. WOLFRAM}{A BMFG MODEL FOR KNOWLEDGE GROWTH}

\section{Introduction}

\noindent 
Endogenous growth theory is based on the assumption that human capital, innovation and knowledge are significant factors for economic growth. Understanding how innovation
and knowledge lead to long-term economic growth attracted a lot of interest in the macroeconomic literature. Different models have been proposed to describe knowledge
increase; most of them relate innovation and/or imitation to knowledge growth. In \cite{JN1996,JR1989} economic growth is initiated by imitation in random meetings.
Koenig et al. \cite{KLZ2012} proposed a decision based models, in which firms decide between 'imitation' or 'innovation'. Luttmer \cite{L2012, L2012-2} claims that noise and imitation
lead to growth and considered a model in which individuals are characterized by their productivity or knowledge level. Here the cumulative distribution function describes
the distribution of knowledge, which evolves as individuals meet each other, compare ideas and improve their own knowledge level. Individual meetings are modeled by 
'collisions' in a Boltzmann type equation for the distribution function. The interaction frequency among individual agents 
is assumed to be given; agents do not decide how much time they invest in learning or working. Lucas and Moll \cite{LM2013}
extended this approach by modeling agents as rational individuals, which decide between either option. Each agent's decision is based on maximizing its individual earnings
given the distribution of all other agents with respect to their knowledge level.  The resulting system corresponds to a Boltzmann equation for the agent distribution that is coupled to an
Hamilton-Jacobi-Bellman equation for the optimal strategy. This novel approach combines mathematical models developed in the field of kinetic equations as well
as mean field games.\\ 

\noindent Mathematical tools and methods from statistical mechanics such as kinetic theory have become a popular and successful tool in economics and social sciences.
The kinetic theory developed by Boltzmann studies the statistical behavior of a system not in equilibrium and has its origins in analyzing the thermodynamics of dilute gases.
The Boltzmann equation describes the evolution of the probability distribution function of molecules due to microscopic interactions, namely the collisions between particles.  
In socio-economic applications these collisions correspond to trading events cf. \cite{BCMW2013,DMT2008}, the exchange of opinion cf. \cite{T2006,DMPW2009, NPT2010,PT2014, BS1,BS2} or non-cooperative games, cf. \cite{DLR2014}. 

\noindent In mean field models the effect of all other individuals on a single individual is replaced by an averaged effect. In mean field game theory the dynamics of a single
individual are determined from a stochastic optimal control
problems, in which the optimal strategy is influenced by the knowledge of the distribution of all other players at all times. Mean field game models received an increasing interest in economics, e.g. for describing the strategic decision making
processes in large player stochastic games, cf. \cite{LL2007, HMC2006} in the last years. Their general structure corresponds to a coupled system of a Fokker-Planck equation describing the evolution
of the macroscopic agent density (forward in time) and a Hamilton-Jacobi-Bellman equation for the optimal strategy (backward in time). \\
Degond et al. \cite{DLR2014} introduced a different mean-field kinetic model for rational agents that act in a game-theoretical framework. This framework initially developed to describe
herding, has been extended to model wealth evolution cf. \cite{DLR2014-2} and further investigated with respect to model predictive control, cf. \cite{DHL2015}. \\

\noindent In this paper we investigate existence and uniqueness of solutions to a Boltzmann mean field game (BFMG) model for knowledge growth, with
a particular focus on the construction of so called balanced growth path. The BMFG system exhibits interesting analytic features, such as mass accumulation of agents at the maximum initial knowledge level.
 Balanced growth path are special solutions, for which the cost function, here the overall production, grows 
exponentially in time.  We discuss and identify specific assumptions and conditions, which allow for the existence of such solutions and illustrate the behavior with numerical simulations.

\noindent This paper is organized as follows: in section \ref{s:modeling} we present the Boltzmann mean field game model of Lucas and Moll and discuss specific modeling assumptions.
Section \ref{s:analysis} focuses on the separate analysis of the Boltzmann and the Hamilton-Jacobi-Bellman equation as well as the coupled system. In section \ref{s:bgp} we discuss the
existence of balanced growth path solutions. We conclude by presenting various numerical examples to illustrate the behavior of the proposed model in 
section \ref{s:numerics}.

\section{A Boltzmann type model for knowledge growth}\label{s:modeling}

\noindent Lucas et al. \cite{LM2013} studied the following scenario: consider an economy or society with a constant number of interacting
agents, which are characterized by their knowledge level. Agents can decide how much time they devote to working (by producing goods with
the knowledge the already have) and how much to learning (by exchanging ideas with others).
Each agent is characterized by its knowledge level $z \in \mathcal{I}$ and the fraction of time $s=s(z,t): \mathcal{I} \times [0,T] \to [0,1]$ it devotes to learning. The interval $\mathcal{I}
$ may correspond to the positive real line, i.e. $ \mathcal{I} = \mathbb{R}^+$, or the bounded interval with maximum
knowledge level $\bar{z}$, i.e. $\mathcal{I} = [0, \bar{z}]$. The function $f = f(z,t)$ 
describes the distribution of the agents with respect to their knowledge $z$ and time $t$. We assume that each agent has one unit of time, hence the time devoted to working corresponds to $1-s(z,t)$.  \\

\noindent The evolution of the distribution $f = f(z,t)$ is modeled by a Boltzmann type approach; individuals meet (i.e. collide) and exchange ideas and knowledge. Lucas and Moll proposed the following minimal interaction law to model knowledge increase. 
If two individuals with knowledge levels $z$ and $z'$ meet,
their post-collision knowledge corresponds to
\begin{align}\label{e:collision}
z = \max(z, z'),
\end{align}
i.e. the agent with the lower knowledge level matches its level with the other. Based on \eqref{e:collision}, the evolution of all agents $f = f(z,t)$ can be described by the 
following Boltzmann type equation:
\begin{subequations}\label{e:boltzmann_moll}
\begin{align}
\partial_t f(z,t) &=  -\alpha(s(z,t)) f(z,t) \intone f(y,t) dy  + f(z,t) \int_0^z \alpha(s(y,t)) f(y,t) dy\\
f(z,0) &= f_0(z).
\end{align}
\end{subequations}
The function $\alpha = \alpha(s(z,t)):[0,1]\rightarrow [0,1]$ denotes the learning function, i.e. the interaction probability of an individual with knowledge level $z$.
 We assume that the initial distribution of all agents
 $f_0 = f_0(z)$ is a probability distribution, i.e. it satisfies $\intI f_0(z)~dz = 1$ and $f_0(z) \geq 0$ for all $z \in \I$. 
 \noindent Note that equation \eqref{e:boltzmann_moll} can be written in terms of the Heaviside function $H = H(z)$, namely
\begin{align}\label{e:boltzmann}
\partial_t f(z,t) = -\alpha(s(z,t)) f(z,t) ((1-H)*f) + f(z,t) (H*(\alpha f)).
\end{align}
Another reformulation of \eqref{e:boltzmann} is based on the cumulative distribution function (CDF) $F(z,t)=\int_0^z f(x,t)\,dx$ and reads as 
\begin{align}\label{e:F}
\partial_t F(z,t) = -[1-F(z,t)]\int_0^z \alpha(s(x,t)) f(x,t) dx.
\end{align}
\noindent We assume that the working and learning time is directly related to the individual benefit; more precisely that the earnings $y = y(z,t)$ of an agent with knowledge level $z$ 
correspond to the product of the time the individual spends on working,  i.e. $1-s(z,t)$, times its knowledge level $z$. Hence we have
\begin{align}\label{e:earning}
y(z,t) = (1-s(z,t))z.
\end{align} 
The per-capita production illustrates the total earning in a society and is given by
\begin{align}\label{e:production}
Y(t)  = \int_{\mathcal{I}}(1-s(z,t)) z f(z,t)~dz.
\end{align}
\noindent Based on the per-capita production  each agent wants to maximize its earnings (discounted by a given temporal
discount factor $r \in \mathbb{R}^+$), by choosing an optimal partition of its working respectively learning time. Then the optimal ratio of
working to learning time (a quantity related to the work-life balance), is determined by the solution $s = s(z,t)$ of the optimal control problem
\begin{align*}
  V(x,t') = \max_{s \in \mathcal{S}} \bigl[\int_{t'}^T \int_0^{\bar z} {e^{-r(t-t')}} (1-s(z,t)) z  \rho_x(z,t) dzdt \bigr],
\end{align*}
subject to 
\begin{align*}
\partial_t \rho_{x}(z,t) = - \alpha(s) \rho_x(z,t) \int_z^{\bar{z}} f(y,t)\,dy + f(z,t) \int_0^z\alpha(s) \rho_x(y,t)\,dy
\end{align*}
with $\rho_x(z,t') = \delta_x$.
Here $\mathcal{S}$ denotes the set of admissible controls given by
\begin{align}
\mathcal{S} = \lbrace s:\mathcal{I} \times [0,T] \to [0,1]\rbrace.
\end{align}

\noindent Then the optimal strategy can be calculated via the Lagrange functional
\begin{align*}
  \mathcal{L} = &\int_{t'}^T\int_0^{\bar z} e^{-r(t -t')} (1-s(z,t)) z \rho_x(z,t)  \\
  &~ - \left[\partial_t \rho_x(z,t) + \alpha(s(z,t)) \rho_x(z,t) \int_z^{\bar z} f(y,t)\,dy \right.\\
&~ - \left.  f(z,t) \int_0^z\alpha(s(y,t)) \rho_x(y,t)\,dy\right] e^{-r(t -t')} V(z,t)\, dz dt.
\end{align*}
The optimality condition with respect to $f$ corresponds to the Hamilton-Jacobi-Bellman equation:
\begin{align*} 
 \partial_t V(z,t) - rV(z,t)  + \max_{s \in \mathcal{S}} &\left[(1-s(z,t)) z - \alpha(s(z,t)) V(z,t) ((1-H)*f) \right.\\
 &\left. + \alpha(s(z,t)) ((1-H)*(Vf))\right]= 0.
 \end{align*}
The function $V = V(z,t)$ denotes the value function, and represents the production level starting from knowledge level $z$ at time $t$ and controlling the
system until a final or infinite time. Altogether we obtain a Boltzmann mean field game (BMFG) of the form
\begin{subequations}\label{e:bmfg1}
\begin{align}
 & \partial_t f(z,t) = -\alpha(S(z,t)) f(z,t) ((1-H)*f) + f(z,t) \Bigl(H*\bigl(\alpha(S(z,t)) f\bigr)\Bigr), \label{e:boltz}\\
\begin{split}
&\partial_t V(z,t) - rV(z,t) =  - \left[(1-S(z,t)) z - \alpha(S(z,t)) V(z,t) ((1-H)*f)\right.\\
&\qquad\qquad  +\left. \alpha(S(z,t)) ((1-H)*(Vf)) \right], \label{e:bellmann}
\end{split}\\
\begin{split}
&S(z,t) = \arg \max_{s \in \mathcal{S}} \left[(1-s(z,t)) z - \alpha(s(z,t)) V(z,t) ((1-H)*f) \right.{}\\
 &\qquad\qquad+ \left.\alpha(s(z,t)) ((1-H)*(Vf)) \right],\label{e:S}
\end{split}\\
&f(z,0) = f_0(z),\\
&V(z,T) = 0.
\end{align}
\end{subequations}
\noindent Lucas and Moll refer to $\alpha = \alpha(s):[0,1] \to \R^+$ as the learning technology function, which may have the form 
\begin{align}\label{e:alpha}
\alpha(s) = \alpha_0 s^n, \quad n \in [0,1).
\end{align}

\noindent We conclude this section by stating three special modeling situations, which we shall discuss
and analyze later.

\subsection{Symmetric meetings}
In the initial model only one party in the interaction/collisions gains knowledge, the other one
has no benefit. To capture symmetric meetings Lucas and Moll propose a modified Boltzmann type equation of the form
\begin{align}\label{e:symmetric}
\begin{split}
\partial_t f(z,t) =& -f(z,t) \int_z^\infty[\alpha(s(z,t)) + \beta \alpha(s(y,t))] f(y,t) dy \\
&+f(z,t) \int_0^z [\alpha(s(y,t)) + \beta \alpha(s(z,t))] f(y,t) dy,
\end{split}
\end{align}
where $\beta \in [0,1]$ encodes the probability to learn from each other. The case $\beta = 0$ corresponds to the original
model, $\beta = 1$ to perfectly symmetric meetings.

\subsection{Special case: $\alpha = \alpha_0 \in \R^+$} 
Let us consider the BMFG model \eqref{e:bmfg1} with a given constant learning function $\alpha = \alpha_0 \in \R$, i.e. $n = 0$ in \eqref{e:alpha}. Note that in this case the
value function $V = V(z,t)$ is positive by definition and that $f = f(z,t)$ is non-negative for all times if $f(z,0) \geq 0$. Then the maximum of
\begin{align*}
((1-s(z,t))z + \alpha_0 \left(V(z,t) ((1-H) * f) + ((1-H) * (Vf))\right) 
\end{align*}
is given by $s(z,t) = 0$. In this case system \eqref{e:bmfg1} decouples and the Boltzmann type equation \eqref{e:boltz} can be written in terms of the cumulative distribution function $F = F(z,t)$, i.e.
\begin{align}\label{e:cdf}
\partial_t F(z,t) = - \alpha_0 (1-F(z,t)) F(z,t).
\end{align}
Hence the optimal strategy $V = V(z,t)$ can be calculated independently, which also motivates the separate analysis of the Boltzmann and the HJB equation in the next section.

\subsection{Balanced growth path}
Lucas and Moll postulated the existence of balanced growth path (BGP) solutions to system \eqref{e:bmfg1}, for which the production rate \eqref{e:production} grows exponentially in time. 
BGPs correspond to solutions in the rescaled variables $(\phi, v, \sigma)$:
\begin{align}\label{e:rescal}
&f(z,t) =  e^{-\ga t}\phi(ze^{-\ga t}), ~~V(z,t)= e^{\ga t}v(ze^{-\ga t}) \text{ and }s(z,t)= \sg(ze^{-\ga t})
\end{align}
assuming the existence of a constant $\gamma \in \mathbb{R}^+$. If this is the case the production \eqref{e:production} can be transformed to
\begin{align*}
Y(t) = e^{\gamma t} \int_0^\infty[1-\sigma(x)] x \phi(x) dx.
\end{align*}
The rescaled equations for $(\phi, v, \sigma) = (\phi(x), v(x), \sigma(x))$ read as 
\begin{subequations}\label{e:bgp}
\begin{align}
&-\ga \phi(x) - \ga \phi'(x) x = \phi(x)\int_0^x\al(\sg(y))\phi(y)\,dy - \al(\sg(x))\phi(x)\int_x^\infty\phi(y)\,dy,\la{e.phi}\\
&(r-\ga)v(x)+\ga v'(x) x = \max_{\sg\in\Xi}\left\{(1-\sg)x+\al(\sg)\int_x^\infty[v(y)-v(x)]\phi(y)\,dy \right\}, \la{e.v}
\end{align}
\end{subequations}
where $\Xi = \lbrace \sigma: \R^+ \to [0,1] \rbrace$ denotes the set of admissible controls.

\section{Analysis of the Boltzmann mean field model}\label{s:analysis}

\noindent In this section we present a local existence result for the fully coupled Boltzmann mean field model \eqref{e:bmfg1}.
We start with the analysis of the Boltzmann equation \eqref{e:boltzmann_moll} for a given interaction rate $\alpha = \alpha(z,t)$. Then we 
discuss existence and uniqueness of solutions for the Hamilton Jacobi equation and conclude the section by studying the fully coupled system.\\

\noindent In this section we will need the following assumptions on the initial datum $f_0 = f_0(z)$:
\begin{enumerate}[label=(A\arabic*)]
\item \label{a:f0prob}  Let $f_0 \in L^{\infty}(\mathcal{I})$ be a probability density, i.e. $\intI f_0(z) dz = 1$ and $f_0(z) \geq 0$ for all $z \in \I$. 
\end{enumerate}

\subsection{Analysis of the Boltzmann equation for a given learning function $\alpha$}\label{s:boltzmann}
We start with the analysis of the Boltzmann type equation \eqref{e:boltzmann_moll} for a given learning function $\alpha = \alpha(z,t)$. Hence we consider 
\begin{subequations}\label{e:boltzf}
\begin{align}
\partial_t f(z,t) &= -\alpha(z,t) f(z,t) \intone f(y,t)~dy + f(z,t) \inttwo \alpha(y,t) f(y,t) dy,\\
f(z,0) &= f_0(z),
\end{align}
\end{subequations}
on the interval $\I = [0, \bar{z}]$.

\begin{rem}\label{r:adjoint}
Let us introduce the operators $G_s = \alpha(s(z,t)) \intone f(y,t)~ dy$ and \\$L_s = \inttwo \alpha(s(y,t)) f(y,t)~ dy$. Then equation 
\eqref{e:boltzf} can be written as 
\begin{align*}
\partial_t f(z,t) =  f(z,t) \left( L_s f(z,t)-G_s f(z,t)  \right).
\end{align*}
Note that $G_s$ and $L_s$ are adjoint operators, i.e. $L_s^* = G_s$ since
\begin{align*}
 (L_sf, g)  &= \int_0^{\bar{z}}\bigl[ \inttwo \alpha(s(y,t)) f(y,t) dy\bigr] g(z,t) dz\\
& = \int_0^{\bar{z}} \bigl[\int_y^{\bar{z}} g(z) dz\bigr] \alpha(s(y,t)) f(y,t) dy.
\end{align*}
\end{rem}

\noindent First we present a global existence result for \eqref{e:boltzf} in time.

\begin{thm}
Let \ref{a:f0prob} be satisfied and $\alpha = \alpha(z,t) \in L^1(\mathcal{I}) \times L^{\infty}([0,T])$. Then equation \eqref{e:boltzf} has a global in time solution $f = f(z,t) \in L^1(\mathcal{I}) \times L^\infty([0,T])$.
\end{thm}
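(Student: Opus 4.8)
The plan is to exploit the two structural features of \eqref{e:boltzf} — preservation of sign and conservation of mass — to obtain a priori bounds strong enough to turn a short-time fixed-point solution into a global one. Following Remark \ref{r:adjoint} I would first write the equation as $\partial_t f = f\,(L_s f - G_s f)$, so that, pointwise in $z$, $f$ solves a linear ODE $\partial_t f = c[f]\,f$ with multiplier given by the Duhamel representation
\[ f(z,t) = f_0(z)\exp\left(\int_0^t c[f](z,\tau)\,d\tau\right), \qquad c[f](z,t) = \inttwo \alpha(y,t) f(y,t)\,dy - \alpha(z,t)\intone f(y,t)\,dy. \]
This representation makes $f\ge 0$ manifest whenever $f_0\ge 0$, since the exponential is positive. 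Integrating the equation over $\mathcal I$ and invoking the adjoint identity $G_s = L_s^*$ from Remark \ref{r:adjoint}, the gain and loss terms cancel, so $\ddt\intI f\,dz = 0$ and $\intI f(z,t)\,dz = 1$ for all $t$; this is exactly the uniform $L^\infty([0,T];L^1(\mathcal I))$ bound asserted in the statement.

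For existence I would set up a fixed-point argument on an appropriate nonnegative, bounded subset of $C([0,\tau];L^1(\mathcal I))$. Given an admissible $g$, define $c[g]$ by the formula above and set $\Phi[g](z,t)=f_0(z)\exp\bigl(\int_0^t c[g](z,\tau)\,d\tau\bigr)$, so that a fixed point of $\Phi$ solves \eqref{e:boltzf}. The point is that the gain integral $\inttwo \alpha(y,t)g(y,t)\,dy$ is controlled by the norm of $\alpha$ and the (conserved) unit mass of $g$, so the exponent stays bounded on finite intervals and $\Phi$ leaves the set invariant. Estimating $c[g_1]-c[g_2]$ against $\|g_1-g_2\|_{L^1}$ and using $|e^a-e^b|\le e^{\max(a,b)}|a-b|$, one checks that $\Phi$ is a contraction on a short interval $[0,\tau]$ and concludes by Banach's theorem; alternatively one runs the Picard iteration $f^{(k+1)}=\Phi[f^{(k)}]$ directly on $[0,T]$ and shows it is Cauchy through a Gronwall estimate in which the factorial gain of the iteration beats the exponential growth of the multiplier.

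Global existence should then follow by continuation: the contraction time $\tau$ and the growth rate of $c[f]$ depend only on the norm of $\alpha$ and the conserved mass, not on the size already attained by $f$, so the local solution may be restarted at $t=\tau,2\tau,\dots$ and concatenated to cover any $[0,T]$. The step I expect to be the main obstacle is precisely ruling out finite-time blow-up, because on the bounded interval $[0,\bar z]$ the dynamics push mass towards $z=\bar z$ — as the explicit constant-$\alpha$ reduction \eqref{e:cdf} shows, with $F(z,t)\to 0$ for every $z<\bar z$. The resolution is that this concentration is only a weak-$*$ effect: since $c[f](z,t)\le \inttwo \alpha(y,t) f(y,t)\,dy$ is bounded by a constant through the conserved mass, the representation formula yields at most exponential growth $\|f(\cdot,t)\|_{L^\infty}\le \|f_0\|_{L^\infty}\,e^{Ct}$, which is finite for every finite $t$. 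Hence no blow-up occurs, the nonlinear integrals remain well defined, and the nonnegative, mass-one solution persists for all time, giving the claimed global $L^\infty([0,T];L^1(\mathcal I))$ solution.
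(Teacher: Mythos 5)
Your proposal is correct and follows essentially the same route as the paper: freeze the nonlinear integrals with a function $g$, obtain a self-map and a contraction on $C([0,\tau];L^1(\mathcal I))$ for small $\tau$ via Gronwall-type estimates, and then iterate the local argument using conservation of mass (which holds at the fixed point by the adjointness in Remark \ref{r:adjoint}) to reach any finite time. The only cosmetic difference is that you solve the frozen linear problem explicitly by the exponential (Duhamel) formula, which makes nonnegativity and the $L^\infty$ bound manifest, whereas the paper works with the same map $\Phi$ through differential inequalities.
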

\begin{proof}
Let $T>0$ be given, $\bar \al: = \max \al$   and $A$ the following closed subset:
$$A=\{g\in C([0,T],L^1(\mathcal{I})), g \ge 0, \| g(\cdot,t) \|_1 \le a\},$$
where $a = c \int f_0(y)\,dy $. For each $g \in A$ we define the operator $\Phi(g)$ as the solution of 
 \begin{align}\label{e:boltzf_g}
\partial_t f(z,t) = -\alpha(z,t) f(z,t) \intone g(y,t)~dy + f(z,t) \inttwo \alpha(y,t) g(y,t) dy,
\end{align}
with initial data $f(z,0) = f_0(z)$. Then the existence of a solution follows from Picard Lindeloef by showing that the operator $\Phi$
\begin{enumerate}[label=(\alph*)]
\item maps $A$ onto itself,
\item is a contraction for $T$ small.
\end{enumerate}
A priori estimates: The change of the total mass can be estimated by  
$$\frac{d}{dt}\int_\mathcal{I} f(y,t) \,dy \le \bar{\alpha} \|g\|_1 \int_\mathcal{I} f(y,t)\,dy .$$
Based on this estimate we use Gronwall to deduce that
\begin{align*}
\int_\mathcal{I} f(y,t) \,dy \le\exp( \bar{\alpha}\int_0^t \|g\|_1\,ds) \int_\mathcal{I} f_0(y) \,dy. 
\end{align*}
Hence (a) is satisfied.  To show that $\Phi$ is a contraction we consider equation \eqref{e:boltzf_g} for two given functions $g_1$ and $g_2$, $g_i \in A$, $i=1,2$.
Then their difference satisfies 
\begin{align*}
\p_t (f_1 - f_2)(z,t)= &-\al(z,t) f_1(z,t) \intone g_1(y,t) \,dy +f_1(z,t) \inttwo \alpha(y,t) g_1(y,t) \,dy\\
 &+ \al(z,t) f_2(z,t) \intone g_2(y,t) \,dy - f_2(z,t) \inttwo \alpha(y,t) g_2(y,t) \,dy.
\end{align*}
Using $\|f_2\|_1 \le \|f_0\|_1 e^{a\bar \al T}=:K_{loc}$, for the $L^1$-norm of the difference,  we obtain the differential inequality
$$\p_t \| f_1 - f_2\|_1 \le 2a \bar \al \| f_1 - f_2\|_1  + 2  K_{loc} \bar \al \|g_1-g_2\|_1.$$
Since $f_1(z,0)= f_2(z,0)$ we deduce, using Gronwall, that $\Phi$ is a contraction for sufficiently small time $T$. Note that all constants in the local existence
argument depend on the initial mass only. Since we have mass conservation, i.e.
\begin{align*}
\intI f(z,t)\,dz = \intI f_0(z)\,dz \text{ for all } t > 0,
\end{align*}
we can iterate the local existence argument at $T, 2T, \ldots$ to obtain global existence.
\end{proof}

\noindent Next we show that the support of $f$ remains bounded if the initial datum $f_0$ has a compact support.
\begin{prop}\label{l:compactsupp}
Let $\alpha \in C([0,T) \times \I) $ and $f = f(z,t)$ be a continuous solution to \eqref{e:boltzf}, i.e. $f \in C([0,T) \times \I)$ with 
$\supp f(\cdot, 0) \subset [0,M]$, $M < \bar{z}$. Then
\begin{align*}
\supp (f(\cdot, t)) \subset [0,M] \text{ for all times } t > 0.
\end{align*}
\end{prop}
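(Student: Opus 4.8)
The plan is to exploit the multiplicative structure of the right-hand side of \eqref{e:boltzf}: since both terms carry the factor $f(z,t)$, the equation can be rewritten as
\begin{align*}
\p_t f(z,t) = f(z,t)\, b(z,t), \qquad b(z,t) := -\al(z,t)\intone f(y,t)\,dy + \inttwo \al(y,t) f(y,t)\,dy.
\end{align*}
For each fixed $z \in \I$ this is a \emph{linear homogeneous} ordinary differential equation in $t$, whose solution is $f(z,t) = f(z,0)\exp\bigl(\int_0^t b(z,s)\,ds\bigr)$. The essential point is therefore that $f$ cannot become positive at a knowledge level where it initially vanishes, so the support can only shrink.

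First I would verify that the coefficient $b$ is locally bounded. Since $f \in C([0,T)\times\I)$ and $\al \in C([0,T)\times\I)$, both are bounded on $[0,T']\times\I$ for every $T' < T$; as $\I = [0,\bar z]$ is bounded, the two integrals defining $b$ are finite and $b$ is bounded, say $|b(z,t)| \le C$ on $[0,T']\times\I$. This guarantees that $t \mapsto \int_0^t b(z,s)\,ds$ is finite and that the ODE for fixed $z$ admits a unique solution.

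Next I would fix $z \in (M,\bar z]$, so that $f(z,0)=0$ by the support hypothesis on the initial datum. From the pointwise bound $|\p_t f(z,t)| \le C\,|f(z,t)|$ together with $f(z,0)=0$, Gronwall's inequality (equivalently, the explicit exponential formula above) yields $f(z,t)=0$ for all $t \in [0,T']$. Since $T' < T$ was arbitrary, $f(z,t)=0$ on $[0,T)$ for every $z > M$, which is precisely the claim $\supp (f(\cdot,t)) \subset [0,M]$.

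I do not anticipate a genuine obstacle here: the whole argument rests on the observation that the collision operator vanishes wherever $f$ vanishes. The only care needed is the (routine) verification that the time-dependent coefficient $b$ is integrable in $t$, which is immediate from the continuity assumptions and the boundedness of $\I$; the continuity of $f$ assumed in the statement is exactly what makes this pointwise-in-$z$ ODE reasoning rigorous, whereas for a merely weak solution one would have to phrase the same mechanism in a distributional or test-function form.
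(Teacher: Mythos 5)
Your proof is correct, and it is in fact tighter than the paper's own argument. Both rest on the same observation---the collision operator carries a factor $f(z,t)$, so the right-hand side vanishes wherever $f$ vanishes---but the executions differ. The paper invokes a maximum principle in contradiction form: it supposes $f(\hat z, t)>0$ for some $\hat z \in (M,\bar z]$, asserts that then $\partial_t f(\hat z,t)>0$, and concludes a contradiction with the vanishing initial datum. As written, that sign claim is not justified (at a point where $f>0$ the loss term can dominate and $\partial_t f$ can be negative), so the paper's two-line proof is really only a sketch of the mechanism. You instead freeze the given continuous solution inside the integrals, so that for each fixed $z$ the function $t\mapsto f(z,t)$ satisfies a linear homogeneous ODE whose coefficient $b(z,\cdot)$ is bounded on every compact slab $[0,T']\times\I$, and then Gronwall (equivalently, uniqueness for the frozen linear ODE) forces $f(z,\cdot)\equiv 0$ whenever $f(z,0)=0$. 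This buys rigor: the estimate $|\partial_t f|\le C|f|$ is exactly the inequality the paper's maximum-principle language gestures at, and your version makes explicit where the continuity hypothesis enters, namely in the boundedness of the coefficient. Two minor stylistic points: the closed exponential formula is unnecessary (the Gronwall bound alone suffices, and it avoids any appearance of circularity since $b$ is built from the solution itself), and at the end one should remark that $[0,M]$ is closed, so pointwise vanishing on $(M,\bar z]$ indeed yields $\supp f(\cdot,t)\subset[0,M]$.
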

\begin{proof}
The proof is based on the maximum principle. Let us assume there exists a point $\hat{z}$ in $(M, \bar{z}]$ such that $f(\hat{z}, t) > 0$. Then
$0 < \partial_t f(\hat{z},t)$. But since $f(z,0) = 0$ for all $z > M$, we deduce that $f(z, t) = 0$ for all $z > M$. \lightning
\end{proof}

\noindent Note that Lemma \ref{l:compactsupp} is only valid for continuous solutions. However we expect that solutions of \eqref{e:boltzf} converge to 
Dirac deltas as time evolves. This can be explained by the fact that individuals with a lower
knowledge level gain knowledge in each collision, but individuals with the greater knowledge level cannot improve. Hence we
conjecture the formation of Dirac deltas at $z = M$, if $\supp(f) \subset [0,M]$. This can be observed in the evolution
of the first order moment, i.e.
\begin{align}
\frac{d}{dt} \intI z f(z,t) dz &= \intI \bigl[ z f(z,t)  \inttwo \alpha(y,t) f(y) dy - f(z,t) \intone \alpha(z,t) f(y,t) z dy] dz \nonumber \\
&= \intI f(z) \intone \alpha(y,t) f(y,t)\underbrace{(y-z)}_{\geq 0} dy dz,\la{e.firstmoment}
\end{align}
where we used the fact that $L_s$ and $G_s$ are adjoint operators (see Remark \ref{r:adjoint}). Hence we deduce that the first order moment is increasing in time. 
Also the mass located in the interval $(z_0,\bar z)$ is increasing, because
\begin{align}
\frac{d}{dt} \int_{z_0}^{\bar{z}}  f(z,t) dz &= \int_{z_0}^{\bar{z}} \bigl[  \int_0^z f(z,t) f(y,t) \alpha(y,t) dy - \int_z^{\bar{z}}  f(z,t)\alpha(z,t) f(y,t) dy \bigr]  dz\nonumber\\
&= \int_{z_0}^{\bar{z}}  \bigl[ \int_0^z f(z,t) f(y,t) \alpha(y,t) dy - \int_{z_0}^{\bar{z}}  f(z,t) f(y,t) \alpha(y,t) dy \bigr] dz \nonumber\\
&= \int_{z_0}^{\bar{z}}  \int_0^{z_0} f(z,t) f(y,t) \alpha(y,t) dy dz \geq 0. \label{e:mass}
\end{align}
From the previous estimates we deduce the following theorem.
\begin{thm}\label{l:deltadirac}
Let $\alpha(z,t) \ge \underline \alpha >0$ and $\bar z \in \supp(f)$, then 
\begin{align*}
f(\cdot,t) \rightharpoonup^* \delta_{\bar z}.
\end{align*}
\end{thm}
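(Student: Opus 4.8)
The plan is to monitor the mass carried above an arbitrary threshold $z_0 < \bar z$ and to show that it exhausts the whole unit mass as $t \to \infty$. Convergence of the cumulative distribution function at every continuity point of the limit then delivers the claimed weak-* convergence. Accordingly, I would set $m(z_0,t) := \int_{z_0}^{\bar z} f(z,t)\,dz$, so that mass conservation gives $\int_0^{z_0} f(y,t)\,dy = 1 - m(z_0,t)$, and rewrite the already established identity \eqref{e:mass} in the factored form
$$\ddt m(z_0,t) = m(z_0,t)\int_0^{z_0}\alpha(y,t)\,f(y,t)\,dy.$$
Invoking the hypothesis $\alpha(z,t) \ge \underline\alpha > 0$ together with $\int_0^{z_0} f\,dy = 1-m$ then yields the logistic differential inequality
$$\ddt m(z_0,t) \ge \underline\alpha\, m(z_0,t)\,\bigl(1 - m(z_0,t)\bigr).$$

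Next I would run a comparison argument. Since $\bar z \in \supp(f)$, the neighbourhood $(z_0,\bar z]$ carries positive mass initially, i.e. $m(z_0,0) =: m_0 > 0$; by the monotonicity $\ddt m \ge 0$ coming from \eqref{e:mass} this persists, so $m(z_0,t) \ge m_0 > 0$ for all $t$. Comparing $m(z_0,\cdot)$ with the solution $u$ of $\dot u = \underline\alpha\, u(1-u)$, $u(0)=m_0$, which is $u(t) = m_0 e^{\underline\alpha t}/(1-m_0+m_0 e^{\underline\alpha t}) \to 1$, gives $m(z_0,t) \ge u(t)$. Combined with the upper bound $m(z_0,t) \le 1$ from mass conservation this forces $m(z_0,t) \to 1$, equivalently $F(z_0,t) = \int_0^{z_0} f(y,t)\,dy \to 0$ as $t \to \infty$, for every fixed $z_0 < \bar z$.

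Finally I would convert this into weak-* convergence. The distribution function of $f(\cdot,t)$ tends to $0$ at each $z_0 < \bar z$ and equals $1$ at $\bar z$; these are precisely the values of $H(\cdot - \bar z)$, the distribution function of $\delta_{\bar z}$, at all of its continuity points. By the standard equivalence between pointwise convergence of distribution functions at continuity points of the limit and weak convergence of probability measures, it follows that $f(\cdot,t) \rightharpoonup^* \delta_{\bar z}$.

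The step requiring the most care is the initialization $m_0 > 0$, which is where the hypothesis $\bar z \in \supp(f)$ does its work: it must be read as guaranteeing positive mass in every left neighbourhood $(z_0,\bar z]$ at the initial time, after which the monotonicity of $m(z_0,\cdot)$ propagates positivity to all later times and thereby launches the logistic lower bound. A secondary point worth flagging is that the estimate is only a long-time, pointwise-in-$z_0$ statement, so the conclusion should be understood as the limit $t\to\infty$ obtained through the distribution-function characterization rather than through any uniform-in-$z_0$ rate.
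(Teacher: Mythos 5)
Your proof is correct and takes essentially the same route as the paper: both start from the factored identity \eqref{e:mass}, use $\alpha \ge \underline{\alpha}$ and mass conservation to obtain the logistic differential inequality $\frac{d}{dt}\bigl(1-F(z_0,t)\bigr) \ge \underline{\alpha}\,\bigl(1-F(z_0,t)\bigr)F(z_0,t)$, and conclude $F(z_0,t)\to 0$ for every $z_0<\bar z$, hence concentration at $\bar z$. The only difference is that you make explicit the steps the paper leaves implicit --- the initialization $m(z_0,0)>0$ supplied by the hypothesis $\bar z\in\supp(f)$, the ODE comparison with the explicit logistic solution, and the conversion of pointwise CDF convergence into weak-* convergence --- which fills in the paper's terse final sentences without changing the argument.
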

\begin{proof}
Setting $z_0 = 0$ in equation \eqref{e:mass} gives $\frac{d}{dt} \int_{z_0}^{\bar{z}}  f(z,t) dz=0$ and therefore implies mass conservation. 
From \eqref{e:mass} we deduce that 
\begin{align*}
-\frac{d}{dt}F(z_0,t)= \frac{d}{dt}(1-F(z_0,t)) &\ge  \underline \alpha \int_{z_0}^{\bar{z}} f(z,t)dz F(z_0,t) =   \underline \alpha (1-F(z_0,t)) F(z_0,t).
\end{align*}
This differential inequality implies that the CDF $F(z_0,t) \to 0$ for $z_0 < \bar z$ as time $t\to \infty$. Since we can choose $z_0$ close to $\bar z$ we conclude that $f$ converges to a Dirac mass.
\end{proof}

\begin{rem}
Note that the formation of a Delta Dirac mass accumulates at $z = \tilde{z}$, where $\tilde{z} = \max_{z} \supp(f)$ for compactly supported initial datum and $\tilde{z} = \bar{z}$ for positive initial data
on the bounded domain $\I = [0,\bar{z}]$. If $f_0(z) > 0$ for all $z \in \R^+$ the mass accumulates at $z = \infty$.
\end{rem}

\subsection{Analysis of the Hamilton-Jacobi Bellman equation}

Next we study the analytic behavior of the Hamilton-Jacobi-Bellman equation for a given $f \in C(0,T,L^1)$ on $\I = \R^+$:
\begin{subequations}\label{e:hjbsolo}
\begin{align}
\begin{split}
  \partial_t V(z,t) - rV(z,t) &=  -\max_{s \in \mathcal{S}} \left[(1-s(z,t)) z - \alpha(s(z,t)) V(z,t) ((1-H)*f)\right.\\
& \phantom{=-\max_{s\in[0,1]}}\left. + \alpha(s(z,t)) ((1-H)*(Vf)) \right]
\end{split}\\
V(z,T) &= 0.
\end{align}
\end{subequations}

\noindent We shall need the following assumption for the terminal value function in the rest of the section:
\begin{enumerate}[label=(A\arabic*), start=2]
\item \label{a:V_pos} Let the final data $V(\cdot,T)$ in equation \re{e:hjbsolo} be non-negative and non-decreasing.
\end{enumerate}

\noindent To ensure the existence of a maximizer we need the following assumptions on the learning function $\alpha = \alpha(s)$:
\begin{enumerate}[label=(A\arabic*), start=3]
\item \label{a:alpha} Let $\alpha: [0,1]  \to \R^+$, $\alpha \in C^{\infty}([0,1]),~\alpha(0) = 0,~\alpha'(0) = \infty$, $\alpha''<0$ and $\alpha$ monotone.
\end{enumerate}
In the following we shall use the variable 
\begin{align*}
B = -V(z,t) (1-H) * f + (1-H) * (Vf), 
\end{align*}
to enhance readability.

\begin{lem} \label{le.unique_s} Let assumption \ref{a:alpha} be satisfied, $z >0$ and $B \in \R$. Then there exists a unique solution $ S =  S (B)$ of the optimization problem 
\beq
\max_{s\in\mathcal{S}}\left((1-s)z +\al(s)B\right), \label{e.maxB}
\eeq
with $S = \arg \max_{s\in\mathcal{S}}\left((1-s)z +\al(s)B\right)$.
\end{lem}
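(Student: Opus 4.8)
The plan is to reduce the optimization over the admissible set $\mathcal{S}$ to a pointwise scalar problem in $s\in[0,1]$ and then settle existence and uniqueness by elementary one–variable calculus, splitting into cases according to the sign of $B$. Since $z>0$ and $B\in\R$ are fixed, problem \re{e.maxB} amounts to maximizing the scalar function
\[
g(s) := (1-s)z + \al(s)B, \qquad s\in[0,1].
\]
Existence of a maximizer is then immediate: by \ref{a:alpha} the map $\al$ is $C^\infty$, so $g$ is continuous on the compact interval $[0,1]$, and Weierstrass' theorem yields at least one maximizer $S$. The real content of the lemma is therefore uniqueness, together with the identification of where the maximizer sits.

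For the uniqueness I would compute $g'(s) = -z + \al'(s)B$ and $g''(s) = \al''(s)B$ and exploit $\al''<0$ from \ref{a:alpha} to fix the sign of $g''$. If $B>0$ then $g''<0$ on $[0,1]$, so $g$ is strictly concave and has a unique maximizer; moreover $\al'(0)=\infty$ forces $g'$ to be positive near $s=0$, so the maximizer is never the left endpoint, and since $g'$ is strictly decreasing it is either the unique interior zero of $g'$ (when $g'(1)<0$) or the right endpoint $s=1$ (when $g'(1)\ge 0$). If $B=0$ then $g(s)=(1-s)z$ is strictly decreasing because $z>0$, so the unique maximizer is $s=0$. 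If $B<0$ then $g''>0$, so $g$ is strictly convex and its maximum over $[0,1]$ must be attained at an endpoint; comparing $g(0)=z$ (using $\al(0)=0$) with $g(1)=\al(1)B<0$ gives $g(0)>g(1)$, so the unique maximizer is again $s=0$.

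The step I expect to be the main obstacle is the case $B\le 0$: the clean route to uniqueness via strict concavity is available only for $B>0$, so for $B\le 0$ one has to argue separately. I would also make the statement $g'(0^+)=+\infty$ precise, reading $\al'(0)=\infty$ as an infinite one–sided derivative at the left endpoint and translating it into the assertion that $g$ is strictly increasing on a right neighborhood of $0$ whenever $B>0$; this is what rules out a maximizer at $s=0$ in that case. For $B<0$ strict convexity excludes any interior maximizer and reduces everything to the finite boundary comparison above. Assembling the three cases yields a single, well–defined selection $S=S(B)$, as claimed.
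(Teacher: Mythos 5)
Your proof is correct, but it takes a genuinely different route from the paper's. You maximize $g(s)=(1-s)z+\al(s)B$ directly and split into cases according to the sign of $B$, because $g$ itself is strictly concave only when $B>0$: there strict concavity (from $\al''<0$) gives uniqueness and $\al'(0)=\infty$ rules out $s=0$; for $B=0$ you use strict monotonicity of $(1-s)z$; for $B<0$ you use strict convexity to push the maximizer to an endpoint and conclude from $g(0)=z>\al(1)B=g(1)$. The paper avoids the case split entirely with a change of variables: setting $\zeta=\al(s)$ and $\be:=\al^{-1}$, the objective becomes $z(1-\be(\zeta))+B\zeta$ on $[0,\al(1)]$, and since $\be$ is strictly convex (as the inverse of a strictly concave increasing function), the term $-z\be(\zeta)$ is strictly concave for $z>0$, so the objective is strictly concave in $\zeta$ for \emph{every} $B\in\R$ and has a unique maximizer in one stroke. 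What each approach buys: the paper's substitution is shorter and uniform in $B$, needing no discussion of the sign of $B$ or of the endpoint behavior of $\al'$; your elementary case analysis additionally identifies where the maximizer sits ($S=0$ for $B\le 0$, $S=1$ for $B\ge z/\al'(1)$, interior otherwise), which is precisely the information the paper only re-derives later, in the proof of Lemma \ref{le.lipschitz}.
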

\begin{proof}
Let $\be := \al^{-1}$, then the problem with $\zeta = \al(s)$ is equivalent to 
$$\arg \max_{\zeta \in [0,\al(1)]} z(1-\be(\zeta) )+ B \sg$$
Because of the strict concavity of $-\be$, there exists a unique solution.
\end{proof}

\begin{lem}\label{le.lipschitz} Let assumption \ref{a:alpha} be satisfied, $z >0$, $B \in \R$ and $S=S(B)$ be the optimal control satisfying \re{e.maxB} for a given $B$. If
\begin{align*}
\lim_{B\to0}\al''(S(B))B^3<0,
\end{align*}
then the maps
$B \to S(B)$, $B\to \al(S(B))$ and $B \to \al(S(B))B$ are Lipschitz.
\end{lem}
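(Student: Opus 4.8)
The plan is to make the maximizer $S(B)$ completely explicit via the first-order condition and then reduce the Lipschitz claim to a derivative bound that is uniform near the only degenerate point, $B=0^+$. Differentiating the objective in \re{e.maxB} gives $\p_s\bigl[(1-s)z+\al(s)B\bigr]=-z+\al'(s)B$. Since \ref{a:alpha} forces $\al$ to be increasing with $\al'(0)=\infty$ and $\al''<0$, the derivative $\al'$ decreases from $+\infty$ at $s=0$ to the finite value $\al'(1)>0$. Hence, for fixed $z>0$: if $B\le 0$ the objective is strictly decreasing and $S(B)=0$; if $0<B\le z/\al'(1)$ there is a unique interior critical point $S(B)\in(0,1]$ solving $\al'(S)=z/B$, which is the global maximizer by concavity (uniqueness also follows from Lemma \ref{le.unique_s}); and if $B>z/\al'(1)$ the derivative stays positive on $[0,1]$ and $S(B)=1$. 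In particular $S(0^+)=0$, because $\al'(S)=z/B\to\infty$ forces $S\to0$, so $S$ is continuous across $B=0$, and $S\to1$ as $B\uparrow z/\al'(1)$; the two outer pieces are constant and therefore trivially Lipschitz, so all the work is on the middle interval.

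On that middle piece the implicit function theorem applied to $\al'(S)-z/B=0$ (whose $S$-derivative is $\al''(S)\ne0$) shows $S(\cdot)$ is $C^\infty$, with
\beq
S'(B)=\f{-z}{\al''(S(B))\,B^2},\qquad \f{d}{dB}\al(S(B))=\al'(S)\,S'(B)=\f{-z^2}{\al''(S(B))\,B^3},
\eeq
where the second identity uses $\al'(S)=z/B$. For the third map the product rule gives $\f{d}{dB}\bigl[\al(S(B))B\bigr]=B\,\f{d}{dB}\al(S(B))+\al(S(B))$. Thus all three Lipschitz claims reduce to bounding these derivatives.

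The heart of the argument, and the only place the extra hypothesis enters, is the limit $B\to0^+$. By assumption there are $c>0$ and $\delta>0$ with $\al''(S(B))B^3\le -c$, i.e. $|\al''(S(B))|B^3\ge c$, for $0<B<\delta$. Then $\bigl|\tfrac{d}{dB}\al(S(B))\bigr|=z^2/(|\al''(S)|B^3)\le z^2/c$ is bounded, which is exactly why the cubic weight $B^3$ appears in the hypothesis. Moreover $|\al''(S)|B^2=|\al''(S)|B^3/B\ge c/B\to\infty$, so $|S'(B)|=z/(|\al''(S)|B^2)\le zB/c\to0$ is bounded too; and the third derivative is bounded since $B\,\tfrac{d}{dB}\al(S)$ is $B$ times a bounded quantity while $\al(S(B))\to\al(0)=0$. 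Away from $0$ nothing degenerates: on the compact middle interval the derivatives are continuous, and at the junction $B=z/\al'(1)$ the value $\al''(1)$ is finite and nonzero (as $\al\in C^\infty$) with $B$ bounded away from $0$. Since each of the three maps is continuous on $\R$ and has a uniformly bounded derivative on each of the finitely many pieces $(-\infty,0]$, $[0,z/\al'(1)]$, $[z/\al'(1),\infty)$, and the pieces overlap at their endpoints, the global Lipschitz constant is the maximum of the piecewise ones, giving all three claims at once.

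I expect Step 3 to be the main obstacle: as $B\to0^+$ the endpoint degeneracy $\al'(0)=\infty$ drives $S\to0$ and $\al''(S)\to-\infty$, and without a quantitative lower bound on $|\al''(S)|B^3$ the derivative of $\al(S(B))$ can genuinely blow up. For the model learning technology $\al(s)=\al_0 s^n$ one computes $\al''(S(B))B^3\sim B^{(1-2n)/(1-n)}$, which fails to stay bounded away from $0$ precisely when $n<1/2$; the stated hypothesis is the minimal quantitative condition that excludes this, and everything else in the proof is routine differentiation and globalization.
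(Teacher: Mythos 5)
Your proposal is correct and takes essentially the same route as the paper: the identical three-case decomposition ($B\le 0$, the interior regime $0<B<z/\al'(1)$, and $B>z/\al'(1)$), the same implicit-differentiation formulas $S'(B)=-z/\bigl(B^2\al''(S)\bigr)$ and $\f{d}{dB}\al(S(B))=-z^2/\bigl(B^3\al''(S)\bigr)$, and the same use of the limit hypothesis to tame the only degeneracy at $B=0^+$. Your write-up is merely more explicit than the paper's terse proof (the product-rule treatment of $\al(S)B$, the gluing of piecewise Lipschitz bounds, and the sharpness check that $\al(s)=\al_0 s^n$ violates the hypothesis exactly when $n<1/2$), but the underlying argument is the same.
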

\begin{proof} We distinguish between the following three cases:\\
Case 1: If $B<0$, then $s=0$.\\
Case 2: If $B > \f{z}{\al'(1)}$ and since $\al$ is concave we deduce that
\begin{align*}
z(1-s)+B\al(s)&\le z(1-s) + B\al(1) + B \al'(1)(s-1)\\
&= B\al(1) + (1-s)(z-B\al'(1))<B\al(1). 
\end{align*}
Hence the maximum is attained at $s=1$.\\
Case 3: If $0<B<\f{z}{\al'(1)}$ let $\al'(s) = \f{d}{ds} \al(s)$ and $\al''(s) = \f{d^2}{ds^2} \al(s)$. In this case there exists a unique solution of $\f{d}{ds}\al(s)=\f z B$, which gives the maximum. 
Furthermore we have that 
$$\f{d^2}{ds^2}\al(s)s'(B) = -\f{z}{B^2} \Rightarrow s'(B) = -\f{z}{B^2\al''(s) },$$
and
\beq
\f{d}{dB}\al(s)= \f{d}{ds}\al(s)s'(B)= \al'(s) s'(B) = \f z B s'(B)=-\f{z^2}{B^3\al''}=-\f{\al'^3}{z\al''}.\la{e.dB_al}\\
\eeq
Because $\lim_{B\to0} -B^3\al''(B) >0$, we conclude that $S$ is piecewise continuously differentiable. Since  
\begin{align*}
\lim_{B\to0} -B^2\al''(B) = \infty,
\end{align*}
we deduce that $S'(B) \rightarrow 0$ as $B \rightarrow 0$. 
Hence we have continuity at $B=0$ and $B=\f{A}{\al'(1)}$, Lipschitz continuity follows for $S(B)$. The same is true for $\al(S)B$. 
\end{proof}

\begin{lem}\la{l.V_pos}
Let assumption \ref{a:V_pos} be satisfied. Then the value function $V(\cdot,t)$ solving \eqref{e:hjbsolo} is non-negative and non-decreasing for all times $t \in [0,T)$.
\end{lem}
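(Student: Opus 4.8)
The plan is to show that the properties ``non-negative'' and ``non-decreasing in $z$'' are preserved as we integrate the Hamilton-Jacobi-Bellman equation \re{e:hjbsolo} backward in time from $t=T$, where they hold by assumption \ref{a:V_pos}. I would work with the integrated (Duhamel) form of the equation. Multiplying \re{e:hjbsolo} by the integrating factor $e^{-rt}$ and integrating from $t$ to $T$ gives
\beq
V(z,t) = \int_t^T e^{-r(\tau - t)} \max_{s\in\mathcal{S}}\Bigl[(1-s)z - \al(s)V(z,\tau)((1-H)*f) + \al(s)((1-H)*(Vf))\Bigr]\,d\tau, \nonumber
\eeq
using $V(z,T)=0$. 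The key observation is that the integrand is essentially the optimized running payoff plus a nonlocal coupling term, and I would argue that if $V(\cdot,\tau)$ is non-negative and non-decreasing for all $\tau$ in some subinterval $[t,T]$, then the same holds for $V(\cdot,t)$. This self-consistent structure suggests a fixed-point / continuation argument, or equivalently a comparison-principle argument on the nonlinear operator.

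First I would establish non-negativity. Using the variable $B = -V(z,t)(1-H)*f + (1-H)*(Vf)$ introduced before Lemma \ref{le.unique_s}, the maximand is $(1-s)z + \al(s)B$, and by choosing the admissible control $s=0$ we get a lower bound of $z \ge 0$ for the bracketed maximum (since $\al(0)=0$ by \ref{a:alpha}, though here $\al$ is only required non-negative). Hence the integrand is non-negative, and integrating a non-negative quantity against the positive kernel $e^{-r(\tau-t)}$ yields $V(z,t)\ge 0$. This step is clean and does not even require the monotonicity hypothesis.

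For monotonicity in $z$ the argument is more delicate, and this is where I expect the main obstacle to lie. I would take two levels $z_1 < z_2$ and compare the maximands. The explicit $z$-dependence through the term $(1-s)z$ is monotone increasing in $z$ (as $1-s\ge 0$), which pushes the maximum up. The difficulty is the nonlocal term: $B$ depends on $z$ both through $V(z,\tau)$ and through the convolutions $(1-H)*f$ and $(1-H)*(Vf)$, which are themselves monotone (decreasing, respectively depending on the sign structure) in $z$. I would use the envelope/comparison principle: evaluating the $z_2$-maximand at the optimal control $S(z_1,\tau)$ chosen for $z_1$ gives a lower bound for the $z_2$-maximum, and I would show this lower bound dominates the $z_1$-maximum by checking that each constituent term is monotone in $z$ under the inductive hypothesis that $V(\cdot,\tau)$ is already non-decreasing. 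The monotonicity of $(1-H)*f$ and the sign of $V$ (just proven non-negative) should make the coupling terms cooperate. To make this rigorous I would likely set it up as a Gr\"onwall-type estimate on $\p_z V$ or run a bootstrap on short time intervals $[T-\tau,T]$ and iterate, using the fixed-point framework of the earlier existence results so that all constants depend only on the data; propagating the monotonicity through the $\max$ and the nonlocal convolutions simultaneously is the crux.
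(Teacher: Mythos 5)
Your non-negativity argument is correct, and it is actually cleaner than the paper's: the paper runs a minimum principle on the backward equation (at a spatial minimum of $V$ the nonlocal bracket has a sign), whereas you observe that the maximum dominates the payoff of the suboptimal control $s\equiv 0$, which is $z\ge 0$. The monotonicity half, however, contains a genuine gap. Your comparison plan — evaluate the $z_2$-maximand at the control $S(z_1,\tau)$ and conclude because ``each constituent term is monotone in $z$'' so the coupling terms ``cooperate'' — fails on the sign of the nonlocal term. Under your own inductive hypothesis that $V(\cdot,\tau)$ is non-decreasing, the quantity $B(z,\tau)=\int_z^\infty[V(y,\tau)-V(z,\tau)]f(y,\tau)\,dy$ satisfies
\[
\partial_z B = -\,\partial_z V(z,\tau)\int_z^\infty f(y,\tau)\,dy \;\le\; 0,
\]
i.e.\ $B$ is \emph{non-increasing} in $z$ — this is precisely what the paper records in Lemma \ref{l.B_mono}. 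So $\alpha(s)B$ moves against the monotone term $(1-s)z$, and the term-by-term comparison cannot close. Worse, the maximized right-hand side of \eqref{e:hjbsolo} need not be monotone in $z$ at all: in the region where $S=1$ it equals $\alpha(1)B(z,\tau)$, which is non-increasing, so a Duhamel representation with a monotone integrand is unavailable even in principle.

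The mechanism that actually closes the argument — and which your ``Gr\"onwall estimate on $\partial_z V$'' hedge would need to contain — is the one the paper uses: differentiate the equation in $z$ and invoke the first-order optimality condition $\alpha'(S)=z/B$ (envelope theorem; at the control boundaries $S\in\{0,1\}$ the $S_z$-terms vanish anyway) to cancel the terms involving $S_z$. What remains is
\[
\partial_t V_z - rV_z = (S-1) + \alpha(S)\,V_z\int_z^\infty f(y,t)\,dy .
\]
The dangerous term $\alpha(S)V_z\int_z^\infty f\,dy$ indeed has the ``wrong'' sign when $V_z\ge0$, but it is proportional to $V_z$ itself: it vanishes where $V_z=0$ and hence acts only as damping, not as forcing. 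The genuine forcing, read backward in time from $V_z(\cdot,T)\ge 0$, is $1-S\ge 0$, so non-negativity of $V_z$ propagates. Without identifying this cancellation and the $V_z$-proportionality of the bad term, the comparison/bootstrap you sketch stalls exactly at the sign of $\partial_z B$.
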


\begin{proof}
Equation \re{e:hjbsolo} can be written as
\begin{align}
\begin{split}
  \p_t V(z,t)&-rV(z,t)=-(1-S(z,t))z{}\\
&-\al(S(z,t))\left[\int_{z}^\infty V(y,t)f(y,t) dy -V(z,t)\int_{z}^\infty f(y,t)dy\right].\la{e:hjbsolo_integral}
\end{split}
\end{align}
In a minimum point $z_0$ we have 
$$\p_t V(z_0,t)-rV(z_0,t)\le-(1-S(z_0,t))z_0-\al\left[V(z_0,t)\int_{z_0}^\infty f(y,t)dy -V(z_0,t)\int_{z_0}^\infty f(y,t) dy\right].$$
Since this inequality is backwards, it preserves non-negativity.
\newline
Calculating the derivative of equation \re{e:hjbsolo_integral} with respect to $z$, we obtain
\begin{align*}
\p_t V_z(z,t)-rV_z(z,t)&=S_z(z,t)z + S(z,t) -1 \\
&-\al'(S(z,t)) S_z(z,t) B +\al (S(z,t)) V_z(z,t)  \int_z^\infty f(y,t)dy. 
\end{align*}
Using $ \alpha'=z/B$, it follows that
$$\p_t V_z(z,t)-rV_z(z,t)=S(z,t) -1  +\al(S(z,t)) V_z(z,t)  \int_z^\infty f(y,t)dy. $$ 
Since $S\le 1$ we deduce that $V_z$ stays non-negative.
\end{proof}

\begin{lem}\la{l.B_mono}
Let assumption \ref{a:V_pos} be satisfied. Then $B(\cdot,t)$ is non-increasing and the maximizer $S(\cdot,t)$ is  non-increasing for all times $t \in [0,T)$.
Moreover the function  $S(\cdot,t)$ is strictly decreasing on the interval where $0<S(\cdot,t)<1$, except in the degenerate case of $B(0,t)=0$. Then $S(z,t)=1$ for small $z$.
\end{lem}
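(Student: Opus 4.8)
The plan is to first turn the abstract quantity $B$ into a concrete integral, read off its sign and monotonicity from Lemma~\ref{l.V_pos}, and then propagate this monotonicity through the variational characterisation of the maximiser supplied by Lemmas~\ref{le.unique_s} and~\ref{le.lipschitz}.

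First I would use the form of $B$ appearing in \re{e:hjbsolo_integral}, namely $B(z,t)=\int_z^\infty\bigl[V(y,t)-V(z,t)\bigr]f(y,t)\,dy$. Since assumption~\ref{a:V_pos} and Lemma~\ref{l.V_pos} guarantee that $V(\cdot,t)$ is non-negative and non-decreasing while $f\ge0$, the integrand is non-negative for $y\ge z$ and hence $B(\cdot,t)\ge0$. Differentiating in $z$, the two terms $\pm V(z,t)f(z,t)$ cancel and one is left with $\p_z B(z,t)=-V_z(z,t)\int_z^\infty f(y,t)\,dy\le0$, again by Lemma~\ref{l.V_pos}. This already gives that $B(\cdot,t)$ is non-increasing, which is the first assertion.

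Next I would combine this with the case analysis of Lemma~\ref{le.lipschitz}, now simplified because $B\ge0$ rules out Case~1. On the set where the maximiser is interior, i.e.\ $0<S(\cdot,t)<1$, the first order condition is $\al'(S)=z/B(z,t)$. The decisive point is that $z\mapsto z/B(z,t)$ is strictly increasing: the numerator increases strictly while, by the previous step, $1/B$ is non-decreasing on $\{B>0\}$. Since $\al''<0$ from~\ref{a:alpha}, the derivative $\al'$ is strictly decreasing, hence invertible, so $S=(\al')^{-1}\!\bigl(z/B\bigr)$ is strictly decreasing there, which is exactly the claimed strict monotonicity on $\{0<S<1\}$.

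Finally I would assemble the global picture from the three regimes together with the continuity of $S$ provided by Lemma~\ref{le.lipschitz}. Because $\al'(0)=\infty$ while $z/B(z,t)\to0$ as $z\to0^+$ whenever $B(0,t)>0$, the first order condition has no interior root for small $z$ and the maximiser sits on the boundary $S=1$ (Case~2); as $z$ grows the ratio $z/B$ sweeps upward through $(\al'(1),\infty)$, pushing $S$ continuously and strictly down through the interior regime, and $S=0$ once $B$ vanishes. Continuity across the two transition values then upgrades these piecewise statements to the global non-increasing behaviour of $S(\cdot,t)$ on $\R^+$, while in the degenerate case $B(0,t)=0$ monotonicity and non-negativity force $B\equiv0$, so the maximisation collapses to $\max_s(1-s)z$ and the $S=1$ branch is absent. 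I expect the differentiation of $B$ and the inversion of $\al'$ to be routine; the step needing most care is this interface analysis, since verifying that the regimes $S=1$, interior, and $S=0$ occur in this order as $z$ increases and that $S$ is genuinely continuous across them is precisely what turns the piecewise strict monotonicity into the global non-increasing statement and isolates the degenerate case.
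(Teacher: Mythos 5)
Your proof is correct and follows essentially the same route as the paper: non-negativity and monotonicity of $B$ via the computation $B_z=-V_z(z,t)\int_z^\infty f(y,t)\,dy\le 0$ resting on Lemma \ref{l.V_pos}, followed by the case analysis of Lemma \ref{le.lipschitz} and the first-order condition $\al'(S)=z/B$, with strict decrease of $S$ coming from strict concavity of $\al$ together with the strict increase of $z\mapsto z/B(z,t)$. Two points of comparison are worth recording. First, you make explicit the interface argument -- that the regimes $S=1$, interior, $S=0$ occur in this order because $z/B$ increases, and that $S$ is continuous across the two transition values -- which the paper leaves implicit; this is the step that actually converts the piecewise statements into the global non-increasing claim, so spelling it out strengthens rather than changes the argument. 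Second, your treatment of the degenerate case diverges from the paper's: when $B(0,t)=0$, monotonicity and non-negativity force $B\equiv 0$, the objective collapses to $\max_{s}(1-s)z$, and you conclude that the $S=1$ branch is absent, i.e.\ $S(z,t)=0$ for $z>0$; the paper's proof instead asserts ``$S(z,t)=1$ for all $z$'' in this case. Your conclusion is the mathematically correct one, since $(1-s)z$ is uniquely maximized at $s=0$ for $z>0$ (and the limit of the interior formula as $B\to 0^+$ is $(\al')^{-1}(\infty)=0$ as well); the degenerate case should thus be read as the situation in which the ``$S=1$ for small $z$'' conclusion fails, exactly as you present it, and the corresponding sentence in the paper's proof appears to be an error.
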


\begin{proof}
We have already seen above that 
$B(z,t)\ge 0$ and $$B_z=-V_z(z,t)  \int_z^\infty f(y,t)dy \le 0.$$ If $B(0,t)=0$, then $S(z,t)=1$ for all $z$. Otherwise $B(0,t)>0$, so $B > \f{z}{\al'(1)}$ and therefore the maximizer $S(z,t)$ is equal to $1$ for small $z$. If $0<B<\f{z}{\al'(1)}$, we have  $S(z,t)=(\al')^{-1}\left(\f z B\right)$. Since $\al$ is strictly concave,  $S(z,t)$ is strictly decreasing.
\end{proof}

\noindent The previous results lead to the following existence and uniqueness theorem for the Hamilton-Jacobi-Bellman equation.

\begin{thm}
Let $f \in C(0,T,L^1)$ be given and $\al = \al(s,t)$ satisfy assumption \ref{a:alpha}. Then there exists a unique solution $V \in C(0,T,L^\infty)$ of \eqref{e:hjbsolo} with $V(z,T)=0$. Moreover, let $\ti V$ be a solution of \eqref{e:hjbsolo} with $\ti f$,  then there exist constants $m$ and $D$ (independent of $\ti V$  and $\ti f$) such that 
\beq
\|V-\ti V\|_\infty \le D e^{mt}\|f-\ti f\|_{C(0,T,L^1)}\|\ti V\|_\infty.
\eeq
\end{thm}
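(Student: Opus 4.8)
The plan is to prove existence, uniqueness, and the stability estimate by rewriting \eqref{e:hjbsolo} as a fixed-point problem on $C(0,T,L^\infty)$ and applying the Banach fixed-point theorem, exploiting the Lipschitz structure of the optimization established in Lemma \ref{le.lipschitz}. First I would reformulate the HJB equation in the integrated form \eqref{e:hjbsolo_integral}, treating it as a backward ODE in $t$ for each fixed $z$, driven by the nonlinear source term built from the maximizer $S$. Concretely, for a given candidate $W \in C(0,T,L^\infty)$ one defines $B_W(z,t) = -W(z,t)\int_z^\infty f(y,t)\,dy + \int_z^\infty W(y,t) f(y,t)\,dy$, then computes the optimal control $S = S(B_W)$ via Lemma \ref{le.unique_s}, and finally solves the \emph{linear} backward problem
\begin{align*}
\p_t V(z,t) - r V(z,t) = -(1-S)z - \al(S) B_W(z,t), \qquad V(z,T) = 0,
\end{align*}
to obtain $V = \Phi(W)$. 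A genuine solution of \eqref{e:hjbsolo} is exactly a fixed point of $\Phi$.

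The core step is to show $\Phi$ is a contraction on $C(0,T_0,L^\infty)$ for $T_0$ small, then iterate. I would take two inputs $W_1, W_2$, form the corresponding $B_1, B_2$, and observe that $\|B_1 - B_2\|_\infty \le 2\|f\|_{C(0,T,L^1)}\|W_1 - W_2\|_\infty$ directly from the definition of $B$ and the $L^1$ bound on $f$. The maps $B \mapsto \al(S(B))$ and $B \mapsto \al(S(B))B$ are Lipschitz by Lemma \ref{le.lipschitz}, so the source terms differ by at most a constant times $\|B_1 - B_2\|_\infty$, hence by a constant times $\|W_1 - W_2\|_\infty$. Integrating the difference of the two backward ODEs from $t$ to $T$ and using Gronwall in the backward direction yields
\begin{align*}
\|\Phi(W_1) - \Phi(W_2)\|_{C(0,T_0,L^\infty)} \le C T_0\, e^{|r|T_0}\, \|W_1 - W_2\|_{C(0,T_0,L^\infty)},
\end{align*}
which is a contraction once $T_0$ is chosen small enough that the prefactor is below one. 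Because all constants depend only on $\|f\|_{C(0,T,L^1)}$, $r$ and the Lipschitz constants of $\al\circ S$, and because the terminal condition at each restart is a bounded function, the local solution can be continued to cover $[0,T]$ by restarting on $[T-T_0, T]$, $[T-2T_0, T-T_0]$, and so on; uniqueness on each subinterval follows from the same contraction estimate.

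For the stability bound I would compare $V$ (driven by $f$) with $\ti V$ (driven by $\ti f$), both solving \eqref{e:hjbsolo_integral}. Writing the difference of the two integrated equations, the right-hand side splits into a term controlled by $\|V - \ti V\|_\infty$ (with coefficient governed by $\al(S)\int_z^\infty f$, bounded since $\al \le \al(1)$ and $f$ is a probability-type density) and a term measuring the sensitivity to the data, of the form $\al(\ti S)\int_z^\infty \ti V (f - \ti f)$ together with the corresponding $V \int_z^\infty (f-\ti f)$ contribution; each such term is bounded by a constant times $\|\ti V\|_\infty \|f - \ti f\|_{C(0,T,L^1)}$. A backward Gronwall argument then gives
\begin{align*}
\|V - \ti V\|_\infty \le D\, e^{mt}\, \|f - \ti f\|_{C(0,T,L^1)}\, \|\ti V\|_\infty,
\end{align*}
with $m$ absorbing $r$ and the boundedness constants and $D$ independent of $\ti V$ and $\ti f$. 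The main obstacle I anticipate is handling the low-regularity behavior of the maximizer near $B = 0$: the Lipschitz property of $B \mapsto \al(S(B))B$ hinges on the nondegeneracy hypothesis $\lim_{B\to0}\al''(S(B))B^3 < 0$ from Lemma \ref{le.lipschitz}, and I would need to verify that the $B_W$ produced along the iteration stays in the regime where that lemma applies (equivalently that the sign/monotonicity structure of Lemma \ref{l.B_mono} is preserved under $\Phi$), so that the contraction constants remain uniform rather than blowing up where $S$ switches between the interior branch and the boundary values $0$ and $1$.
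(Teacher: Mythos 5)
Your proposal is correct and follows essentially the same route as the paper: both arguments rest on the Lipschitz continuity of $V \mapsto B(V,f)$ together with Lemma \ref{le.lipschitz} for $B \mapsto \al(S(B))$ and $B \mapsto \al(S(B))B$, yielding existence and uniqueness by a Picard--Lindel\"of/contraction argument, and both obtain the stability bound by the same splitting $R(V)-\ti R(\ti V) = [R(V)-R(\ti V)] + [R(\ti V)-\ti R(\ti V)]$ with the estimate $\|B(\ti V,f)-B(\ti V,\ti f)\|_\infty \le \|\ti V\|_\infty \|f-\ti f\|_{C(0,T,L^1)}$ and Gronwall. Your frozen-coefficient map $\Phi$ is merely a cosmetic repackaging of the paper's direct application of Picard--Lindel\"of to $\p_t V = R(V)$, and your closing concern is sensibly flagged but harmless, since the nondegeneracy hypothesis of Lemma \ref{le.lipschitz} is a condition on $\al$ alone and gives Lipschitz continuity for all $B \in \R$.
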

\begin{proof}The proof is based on the following statements: 
\begin{itemize}
\item[(i)] $V\to B(V,f)=(1-H)\star (Vf)-V(1-H)\star f$ is Lipschitz on the spaces given. 
\item[(ii)] $B(V) \to S$ and $B(V)\to \al(S)B$ are Lipschitz because of Lemma \re{le.lipschitz}. 
\end{itemize}
Then equation  \eqref{e:hjbsolo} is of the form
$$\p_t V= R(V),$$ with $R$ Lipschitz and we can conclude the proof with Picard-Lindeloef.
For  the difference $V-\ti V$ we obtain
$$\p_t (V-\ti V)=R(V)-\ti R(\ti V)= R(V)-R(\ti V) + R(\ti V) -\ti R(\ti V).$$
Since the estimate $\|B(\ti V,f)-B(\ti V,\ti f)\|_\infty \le \|\ti V\|_\infty \|f-\ti f\|_{C(0,T,L^1)}$ holds, we can show that 
$$\|R(\ti V)-\ti R(\ti V)\|\le K \|f-\ti f\| \|\ti V\|_\infty$$ with $K$ independent of $\ti V,f,\ti f$. Therefore we conclude 
\begin{align*}
\|V-\ti V\|_\infty \le D e^{mt}\|f-\ti f\|_{C(0,T,L^1)}\|\ti V\|_\infty. 
\end{align*}
\end{proof}

\subsection{Analysis of the fully coupled Boltzmann mean field game system}
We show existence and uniqueness of the fully coupled system using a fixed-point argument.\\
\noindent Because of the term $(1-s)z$, we expect linear growth of $V$ in $z$. Therefore we are looking for a solution in the space $C(0,T,L^\infty_{1+z}(\R^+))$, where
$$W^\infty:=L^\infty_{1+z}(\R^+):=\{u=(1+z)w| w \in L^\infty(\R^+)\}\text{ with } \|u\|_{L^\infty_{1+z}}:=\text{ess sup } \f{|u(z)|}{1+z}.$$
To compensate we use a weighted $L^1$-norm for $f$:
$$W^1:=L^1_{\f{1}{1+z}}(\R^+):=\{u=\f{w}{1+z}| w \in L^1(\R^+)\}\text{ with } \|u\|_{L^1_{\f{1}{1+z}}}:=\int |u(z)|(1+z)\,dz.$$

\noindent First we state two lemmas, which provide the necessary estimates and bounds for the local existence proof.
\begin{lem}
Let the initial datum $f_0 = f_0(z)$ satisfy assumption \ref{a:f0prob} and the learning function $\alpha = \alpha(s)$ assumption \ref{a:alpha}. Then every solution $f = f(z,t)$ to \eqref{e:boltzmann} has a bounded first order moment.
\end{lem}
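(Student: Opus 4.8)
The plan is to close a differential inequality for the first moment $m_1(t):=\int_0^\infty z\,f(z,t)\,dz$ by Gronwall, exploiting conservation of mass. The first ingredient is that \re{e:boltzmann} conserves total mass, so that $\int_0^\infty f(z,t)\,dz=\int_0^\infty f_0(z)\,dz=1$ for all $t$; this follows from the gain/loss balance exactly as in the mass identity used in Theorem \ref{l:deltadirac} (setting $z_0=0$). Keeping $\|f(\cdot,t)\|_{L^1}\equiv 1$ is what will let me absorb the collision integral into $m_1$ itself.

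Next I would test \re{e:boltzmann} against $z$ and integrate, reproducing the computation \re{e.firstmoment} on the half-line. Writing $a(z,t):=\al(s(z,t))$ and using the adjoint structure of the operators $G_s,L_s$ from Remark \ref{r:adjoint} to symmetrize the resulting double integral, the production and loss terms combine into the single manifestly nonnegative expression
\[
\ddt m_1(t)=\int_0^\infty\!\!\int_0^v (v-u)\,a(u,t)\,f(u,t)\,f(v,t)\,du\,dv .
\]
In particular $m_1$ is non-decreasing in time, consistent with the accumulation-at-the-top heuristic discussed after Theorem \ref{l:deltadirac}.

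To obtain an upper bound I would use $v-u\le v$ (valid since $u\ge 0$) together with the boundedness of the learning function: by assumption \ref{a:alpha} the map $\al$ is continuous on the compact interval $[0,1]$, hence $\bar\al:=\max_{[0,1]}\al<\infty$ and $a(u,t)\le\bar\al$ independently of the control $s$. Extending the inner integration to $(0,\infty)$, which only enlarges a nonnegative integrand, and using mass conservation yields
\[
\ddt m_1(t)\le \bar\al\int_0^\infty\!\!\int_0^\infty v\,f(u,t)\,f(v,t)\,du\,dv=\bar\al\,\|f(\cdot,t)\|_{L^1}\,m_1(t)=\bar\al\,m_1(t).
\]
Gronwall then gives $m_1(t)\le m_1(0)\,e^{\bar\al t}$, finite on $[0,T]$ provided $m_1(0)=\int_0^\infty z f_0(z)\,dz<\infty$. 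This finiteness of the initial moment is not literally part of \ref{a:f0prob}, but is exactly the requirement $f_0\in W^1$ built into the functional setting of this subsection, and I would add it explicitly to the hypotheses.

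The main obstacle is not the algebra but its justification on the unbounded domain $\R^+$, where a priori neither $z f$ need be integrable nor is differentiation under the integral sign immediate. I would handle this by a truncation argument: run the same computation with the weight $z\wedge R$ (or a smooth cutoff), observe that the bounds $v-u\le v$ and $a\le\bar\al$ produce a differential inequality whose constant $\bar\al$ is independent of $R$, and then let $R\to\infty$ by monotone convergence, which is clean precisely because the symmetrized integrand is nonnegative. This delivers the uniform-in-$R$, hence finite, bound on $m_1$ over $[0,T]$.
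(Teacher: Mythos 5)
Your proof takes essentially the same route as the paper's: it reuses the first-moment identity \eqref{e.firstmoment}, bounds the gain term via $(y-z)\le y$ and $\al(\cdot)\le\bar\al$, invokes mass conservation to reduce the double integral to $\bar\al\,m_1(t)$, and closes with Gronwall to obtain an exponential bound. The additional care you take --- explicitly requiring $\int_0^\infty z f_0(z)\,dz<\infty$ (which the paper leaves implicit in its weighted-space setting) and justifying the computation on the unbounded domain by truncation and monotone convergence --- is sound and only adds rigor that the paper's two-line argument omits.
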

\begin{proof}
We reiterate from \eqref{e.firstmoment} that 
\begin{align*}
\frac{d}{dt} \intI z f(z,t)\, dz &\le  \intI f(z)  \int_z^\infty \alpha(y,t) f(y,t)y\, dy dz \le \bar \al  \intI  f(y,t)y\, dy.
\end{align*}
This gives an exponential bound for the first order moment.
\end{proof}

\begin{lem}
Let $V = V(z,t)$ be a solution to \eqref{e:bellmann}. Then $V$ is bounded in $L^\infty_{1+z}(\R^+)$.
\end{lem}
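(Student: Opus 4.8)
The plan is to derive an a priori bound on the weighted sup-norm $\|V(\cdot,t)\|_{W^\infty}$ by combining the sign information already established in Lemmas \ref{l.V_pos} and \ref{l.B_mono} with a Gronwall argument carried out in reversed time. First I would rewrite \eqref{e:bellmann} in the integral form \eqref{e:hjbsolo_integral}, evaluated at the optimal control $S$, namely
\begin{align*}
\p_t V(z,t) - rV(z,t) = -(1-S(z,t))z - \al(S(z,t))B(z,t),
\end{align*}
where $B(z,t) = \int_z^\infty [V(y,t)-V(z,t)]f(y,t)\,dy \ge 0$ by the monotonicity of $V$. Passing to the reversed time $\tau = T-t$ and using the terminal condition $V(\cdot,T)=0$ turns this into a forward Cauchy problem in $\tau$ with vanishing initial datum.

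Next I would exploit the available signs. Since $V \ge 0$ by Lemma \ref{l.V_pos} and $r>0$, the contribution $-rV$ has a favorable sign; since $0\le S\le 1$ we have $(1-S)z \le z$; and since $\al(S)\le \bar\al := \max_{s\in[0,1]}\al(s) = \al(1)$ together with $B\ge 0$, I can drop the subtracted nonnegative term $V(z)\int_z^\infty f\,dy$ and estimate $\al(S)B \le \bar\al \int_z^\infty V(y,\tau)f(y,\tau)\,dy$. This produces the differential inequality
\begin{align*}
\p_\tau V(z,\tau) \le z + \bar\al \int_z^\infty V(y,\tau)f(y,\tau)\,dy .
\end{align*}

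The crucial step is to close this self-consistently, as the right-hand side still involves $V$ nonlocally. Setting $M(\tau) := \mathrm{ess\,sup}_z\, V(z,\tau)/(1+z)$ and inserting $V(y,\tau)\le M(\tau)(1+y)$, I would invoke mass conservation $\|f\|_1 = 1$ together with the bounded first moment from the preceding lemma to obtain $\int_z^\infty V(y,\tau)f\,dy \le M(\tau)\int_0^\infty (1+y)f\,dy \le C_f\,M(\tau)$, where $C_f := \sup_{[0,T]}\bigl(1 + \int y f\,dy\bigr)$ is finite on the finite horizon. Integrating in $\tau$ from the zero initial datum and dividing by $(1+z)$ then yields
\begin{align*}
M(\tau) \le \tau + \bar\al\, C_f \int_0^\tau M(s)\,ds .
\end{align*}
Gronwall's inequality bounds $M$ on $[0,T]$, which is exactly the claim $V\in C(0,T,W^\infty)$; together with $V\ge 0$ this gives the two-sided control in $L^\infty_{1+z}(\R^+)$.

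The main obstacle is precisely the circularity just described: $B$ is a nonlocal functional of $V$, so the bound cannot be obtained pointwise in $z$ but must be propagated through the weighted norm itself. Controlling $\int_z^\infty Vf\,dy$ in terms of $M(\tau)$ is what forces the weighted pairing $W^\infty$–$W^1$ and the use of the first-moment estimate; without mass and first-moment control this integral could diverge. I would also need to verify that $M(\tau)$ is a priori finite (which follows from working with the solution furnished by the existence theory) so that the Gronwall closure is genuine rather than vacuous.
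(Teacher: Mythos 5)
Your proof is correct and takes essentially the same route as the paper's: divide by the weight $1+z$, reverse time, bound the nonlocal term via $\al(S)B \le \bar \al \int_z^\infty V f\,dy \le \bar \al \|V\|_{W^\infty}\|f\|_{W^1}$ (using $V\ge 0$, $B\ge 0$, mass conservation and the first-moment bound), and close with Gronwall. The only differences are cosmetic: you exploit the favorable sign of $-rV$ in reversed time and spell out the Gronwall closure and the a priori finiteness of the weighted norm, both of which the paper leaves implicit.
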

\begin{proof}
Dividing equation  \re{e:bellmann}  by $1+z$  and changing $t$ to $-t$, we obtain
$$\p_t \f V {1+z} \le  r\f V {1+z} + \f z {1+z} +  \f 1 {1+z}  \al B(V,f)$$
and 
$$B(V,f)\le \int_z^\infty (Vf)(x)dx\le\|V\|_{W^\infty}\|f\|_{W^1}\le C \|V\|_{W^\infty} e^{Kt}.$$
This implies
$$\p_t \f V {1+z} \le  r\f V {1+z} + \f z {1+z} +  \f 1 {1+z}  \bar \al  \|V\|_{W^\infty} C e^{Kt}.$$
\end{proof}

\begin{thm} Let assumptions \ref{a:f0prob}-\ref{a:alpha} be satisfied. If $\lim_{s\to 0} \f{(\al')^3}{\al''} < \infty$, then 
the  fully coupled Boltzmann mean field game system \re{e:bmfg1} on $\I = \R^+$ has a unique local in time solution.
\end{thm}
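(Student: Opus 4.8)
The plan is to close the system by a Banach fixed-point argument for the pair $(f,V)$ in the space $C(0,T;W^1)\times C(0,T;W^\infty)$, cycling through the three constitutive relations of \re{e:bmfg1}: the strategy $S$ is recovered pointwise from $(f,V)$, the density $f$ solves the Boltzmann equation \re{e:boltz} with learning rate $\al(S)$, and $V$ solves the HJB equation \re{e:bellmann} driven by $f$. Concretely, given $(\bar f,\bar V)$ in a closed ball of this space, I would first form
\beq
\bar B(z,t)=-\bar V(z,t)\,((1-H)*\bar f)+((1-H)*(\bar V\bar f)),\nonumber
\eeq
define $\bar S=S(\bar B)$ by the pointwise maximization of Lemma \re{le.unique_s}, solve the Boltzmann equation \re{e:boltzf} with given rate $\al(\bar S)$ for a new $f$ via the global existence theorem, and finally solve the HJB equation \re{e:hjbsolo} with this $f$ for a new $V$ via its existence and uniqueness theorem. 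This defines a map $\mathcal T(\bar f,\bar V)=(f,V)$ whose fixed points are exactly the solutions of \re{e:bmfg1}. The sign and monotonicity results of Lemmas \re{l.V_pos} and \re{l.B_mono} guarantee $B\ge 0$ throughout, so the iteration stays in the regime where the control is well defined.

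The first task is invariance: $\mathcal T$ should map a suitable ball into itself. This is supplied directly by the two preceding lemmas, which bound $\|f\|_{W^1}$ through the first-order moment and $\|V\|_{W^\infty}$ through the weighted $L^\infty$ estimate, both by quantities of the form $Ce^{KT}$ that remain bounded on $[0,T]$. Choosing the radius of the ball larger than these bounds makes it invariant under $\mathcal T$.

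The second task is contraction. For two inputs I would chain the Lipschitz estimates in the following order: (i) $\|\bar B_1-\bar B_2\|_{W^\infty}\lesssim\|\bar f_1-\bar f_2\|_{W^1}+\|\bar V_1-\bar V_2\|_{W^\infty}$, obtained by bounding the two tail convolutions $\int_z^\infty \bar V\bar f\,dy$ and $\bar V\int_z^\infty\bar f\,dy$ against the weighted norms together with the a priori bounds; (ii) $\|\al(\bar S_1)-\al(\bar S_2)\|$ and $\|\al(\bar S_1)\bar B_1-\al(\bar S_2)\bar B_2\|\lesssim\|\bar B_1-\bar B_2\|$ by Lemma \re{le.lipschitz}; (iii) continuous dependence of the Boltzmann solution on the rate, $\|f_1-f_2\|_{W^1}\lesssim T\,\|\al(\bar S_1)-\al(\bar S_2)\|$, from the same Duhamel--Gronwall computation as in the proof of the global existence theorem, where the factor $T$ appears because the two solutions share the initial datum $f_0$; and (iv) the continuous-dependence estimate $\|V_1-V_2\|_\infty\le D e^{mT}\|f_1-f_2\|\,\|V_2\|_\infty$ established for the HJB equation. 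Composing (i)--(iv), both output differences carry an overall factor $\sim CTe^{mT}$, which tends to $0$ as $T\to 0$; hence $\mathcal T$ is a contraction on the invariant ball for $T$ small, and Banach's theorem yields a unique local-in-time solution.

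The main obstacle is step (ii): the uniform Lipschitz control of the strategy maps near the degenerate point $B=0$. Since $\al'(0)=\infty$ under assumption \ref{a:alpha}, small $B$ drives the maximizer $S$ toward $0$, and the derivative $\f{d}{dB}\al(S)=-(\al')^3/(z\al'')$ computed in Lemma \re{le.lipschitz} threatens to blow up there. The hypothesis $\lim_{s\to0}(\al')^3/\al''<\infty$ is exactly what keeps this derivative bounded down to $B=0$, securing the Lipschitz continuity of $B\mapsto\al(S)$ and $B\mapsto\al(S)B$ on which the contraction rests. A secondary technical point is verifying estimate (i) in the weighted spaces, that is, controlling the Heaviside convolutions against the $(1+z)$ weight so that $B$ indeed lands in $W^\infty$; this is routine once the first-moment and $L^\infty_{1+z}$ bounds are in hand.
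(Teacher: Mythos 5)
Your overall strategy coincides with the paper's: a Banach fixed-point argument in the weighted spaces $W^1$ and $W^\infty$, chained through the Lipschitz properties of $B\mapsto S$ and $B\mapsto \al(S)B$ from Lemma \re{le.lipschitz}, a Gronwall stability estimate for the Boltzmann equation, and the continuous-dependence estimate for the HJB equation. The one structural difference is that you iterate on the pair $(f,V)$, computing the control $\bar S$ from a frozen, arbitrary pair, whereas the paper iterates on the density alone ($\Psi:g\mapsto f$, with $V$ and $S$ always obtained by actually solving the HJB equation with datum $g$). This difference is not cosmetic; it is exactly where your argument develops a gap.

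The gap is in your step (ii). You claim the hypothesis $\lim_{s\to0}(\al')^3/\al''<\infty$ "keeps the derivative bounded down to $B=0$", but the derivative computed in Lemma \re{le.lipschitz} is $\f{d}{dB}\al(S)=-\f{(\al')^3}{z\al''}$: it carries a factor $1/z$ that the hypothesis does not touch. For each fixed $z>0$ the hypothesis does bound this quantity as $B\to0$ (i.e.\ as $S\to0$), but the bound degenerates like $1/z$, so the uniform estimate $\|\al(\bar S_1)-\al(\bar S_2)\|_\infty\le C\|\bar B_1-\bar B_2\|$ that your steps (ii)--(iii) require fails near $z=0$. The paper closes precisely this hole with Lemma \re{l.B_mono}: for a control produced by an HJB solution with non-negative, non-decreasing value function, $B$ is non-negative and non-increasing, hence $S=\ti S=1$ on a neighbourhood of $z=0$ (the regime $B>z/\al'(1)$ of Lemma \re{le.lipschitz}), so the difference $\al(S)-\al(\ti S)$ vanishes identically there and the $1/z$ singularity never acts. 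In the paper's density-only scheme this structure is automatic. In your scheme it is not: $\bar S$ is computed from an arbitrary pair $(\bar f,\bar V)$ in a ball of $C(0,T;W^1)\times C(0,T;W^\infty)$, and Lemmas \re{l.V_pos} and \re{l.B_mono}, which you invoke, are statements about solutions of \re{e:hjbsolo}, not about arbitrary $\bar V$; nothing forces $\bar B\ge0$, $\bar B$ monotone, or $\bar S=1$ near $z=0$ for such inputs. To repair the proof you must either restrict the fixed-point set to pairs with $\bar V$ non-negative and non-decreasing in $z$ and prove this constraint is invariant under $\mathcal T$ (using Lemma \re{l.V_pos}), or switch to the paper's iteration on $g$ alone, where the needed monotonicity of $B$ comes for free.
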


\begin{proof}
For local existence, we take a $g\in C(0,T,L^1_{\f{1}{1+z}}(\R^+))$ and solve 
\beq
\partial_t V(z,t) - rV(z,t) =  -\max_{s \in \mathcal{S}} \left[(1-s(z,t)) z - \alpha(s(z,t)) B(V,g)\right],\label{e:bellmannloc}
\eeq
for $V$ and $S$. Then we solve
\beq
  \partial_t f(z,t) = -\alpha(S(z,t)) f(z,t) ((1-H)*f) + f(z,t) \Bigl(H*\bigl(\alpha(S(z,t)) f\bigr)\Bigr), \label{e:boltzloc}\\
\eeq
for $f$. Let $\Psi$ denote the operator mapping $g$ to $f$.
We show that $\Psi$ is a contraction by considering two solutions $g$ and $\ti{g}$. Taking the difference of equation \re{e:bellmannloc} for $g$ and $\ti g$ gives
$$\p_t (V-\ti V)= r(V-\ti V)+z(S-\ti S)-(\al(S)B(V,g)-\al(\ti S)B(\ti V,\ti g)).$$
To show that the second term on the RHS is Lipschitz, we calculate
$S'(B)=-\f{z}{B^2\al''}=-\f{(\al')^2}{z\al''}$. 
Similarly for the third term on the RHS we deduce that
$$\f{d}{dB}(\al B)=\al'(S) S'(B) B+ \al = z S' + \al = -\f{(\al')^2}{\al''}+\al.$$
Since both derivatives are  bounded we obtain
$$\p_t (V-\ti V)\le r(V-\ti V)+K|(B(V,g)-B(\ti V,\ti g)|.$$
Let us consider the last term only. Due to the structure of $B$, i.e. $B(V,g)=(1-H)\star(Vg)-V(1-H)\star g$, we work on the part $(1-H)\star(Vg)$ only. The other part can be estimated using similar
arguments. We deduce that
\begin{align}
\begin{split}
(1-H)\star(Vg)-(1-H)\star(\ti V\ti g)&=(1-H)\star[(V-\ti V) g+\ti V (g-\ti g)]\\
&=\int_z^\infty (V-\ti V) g\,dx+\int_z^\infty\ti V (g-\ti g)\,dx\\
&\le\|V-\ti V\|_{W^\infty} \|g\|_{W^1}+\|\ti V\|_{W^\infty} \|g-\ti g\|_{W^1}.
\end{split}
\end{align}
This implies the following inequality:
$$\p_t (V-\ti V)\le r(V-\ti V)+2K\|V-\ti V\|_{W^\infty} \|g\|_{W^1}+\|\ti V\|_{W^\infty} \|g-\ti g\|_{W^1}.$$
So for $\int_0^T \|g-\ti g\|_{W^1}\,dt$ small, $\|V-\ti V\|_{W^\infty}$ is small. Moreover according to Lemma \re{l.B_mono}, $S=\ti S=1$ for small $z$, so the Lipschitz constant coming from \re{e.dB_al} is bounded. Therefore we obtain that also the term $|\al(S)-\al(\ti S)|$ is small. This implies for equation \re{e:bellmannloc} that $\|f-\ti f\|_{W^1}\le a \|g-\ti g\|_{W^1}$ with $a<1$ for $T$ small enough. Hence the operator $\Psi$ is a contraction, which concludes the proof.
\end{proof}

\section{Balanced growth paths}\label{s:bgp}

\noindent In this section we discuss the existence of balanced growth path solutions and the convergence behavior towards them. Balanced growth
paths correspond to solutions $(\phi,v,\sigma)$ for which the production function \eqref{e:production} grows exponentially in time. We reiterate that the 
rescaled equations for the Boltzmann mean field game \eqref{e:bmfg1} in the variables $(\phi, v, \sigma) = (\phi(x), v(x), \sigma(x))$ with
\begin{align}\label{e:rescalbgp}
&f(z,t) =  e^{-\ga t}\phi(ze^{-\ga t}), ~~V(z,t)= e^{\ga t}v(ze^{-\ga t}) \text{ and }s(z,t)= \sg(ze^{-\ga t})
\end{align}
 read as
\begin{subequations}\label{e:bgp2}
\begin{align}
-\ga \phi(x) -\ga \phi'(x) x &= \phi(x)\int_0^x\al(\sg(y))\phi(y)\,dy - \al(\sg(x))\phi(x)\int_x^\infty\phi(y)\,dy\la{e.phi2}\\
(r-\ga)v(x)+\ga v'(x) x &= \max_{\sg\in\Xi}\left\{(1-\sg)x+\al(\sg)\int_x^\infty[v(y)-v(x)]\phi(y)\,dy \right\} \la{e.v2}
\end{align}
\end{subequations}
where $\Xi = \lbrace \sigma: \R^+ \to [0,1] \rbrace$ denotes the set of admissible controls. A necessary prerequisite for the
existence of BGP solutions is the assumption that the initial cumulative distribution function $F(z,0)$ has a Pareto tail, which is given by:  

\begin{enumerate}[label=(A\arabic*), start=4] 
\item \label{a:pareto}  The productivity function $F(z,0) = \int_0^z f_0(y) dy$ has a Pareto tail, i.e. there exist constants $k, \theta \in \R^+$ such that
 \begin{align}
\lim_{z \to \infty} \f{1-F(z,0)}{z^{-1/\theta}} = k. \label{e:paretoF}
\end{align}
Condition \eqref{e:paretoF} in the rescaled variable $\phi$ reads as:
\beq 
\lim_{z\to \infty} \f{\int_z^\infty\phi(y)\,dy}{z^{-1/\theta}}=k. \la{e.pareto}
\eeq
\end{enumerate}

\begin{lem}
Let assumption \ref{a:pareto} be satisfied. Then $F = F(z,t)$ has a Pareto tail with the same decay rate $\theta$ for all times $t\in[0,T]$. 
\end{lem}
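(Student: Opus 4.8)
The plan is to reduce the statement to a scalar linear ODE in time by working with the tail function $G(z,t):=1-F(z,t)=\int_z^\infty f(y,t)\,dy$. Rewriting the cumulative formulation \re{e:F} for $G$ gives, for each fixed $z$,
\[
\p_t G(z,t)=G(z,t)\,A(z,t),\qquad A(z,t):=\int_0^z\al(s(x,t))\,f(x,t)\,dx ,
\]
which integrates to
\[
G(z,t)=G(z,0)\,\exp\Bigl(\int_0^t A(z,\tau)\,d\tau\Bigr).
\]
Thus the whole question is reduced to understanding the exponential prefactor as $z\to\infty$, since the decay rate of $G(z,t)$ in $z$ can only be modified by this $z$-dependent factor.

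Next I would pass to the limit $z\to\infty$ in $A(z,t)$. Because $f(\cdot,t)$ is a probability density and $0\le\al\le\bar\al:=\max\al$, the integrand is nonnegative, so $z\mapsto A(z,t)$ is nondecreasing and bounded above by $A_\infty(t):=\int_0^\infty\al(s(x,t))\,f(x,t)\,dx\le\bar\al$. Hence $A(z,\tau)\nearrow A_\infty(\tau)$ pointwise in $\tau$, and monotone convergence yields $\int_0^t A(z,\tau)\,d\tau\to I(t):=\int_0^t A_\infty(\tau)\,d\tau$ with $0\le I(t)\le\bar\al\,t<\infty$. By continuity of $\exp$, the prefactor converges to $e^{I(t)}$, a finite strictly positive number.

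Finally I would combine the two factors. Writing
\[
\f{1-F(z,t)}{z^{-1/\theta}}=\f{G(z,0)}{z^{-1/\theta}}\,\exp\Bigl(\int_0^t A(z,\tau)\,d\tau\Bigr),
\]
the first factor tends to $k$ by assumption \ref{a:pareto} in the form \re{e.pareto}, and the second to $e^{I(t)}$ by the previous step, so
\[
\lim_{z\to\infty}\f{1-F(z,t)}{z^{-1/\theta}}=k\,e^{I(t)}=:k(t)\in(0,\infty).
\]
This is precisely the Pareto-tail property with the \emph{same} exponent $\theta$; only the constant $k$ is replaced by the time-dependent $k(t)=k\,e^{I(t)}$.

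The main obstacle is the interchange of the limit $z\to\infty$ with the time integration, and then with the exponential; this is exactly where the monotonicity of $A(z,\cdot)$ in $z$ and the uniform bound $A(z,\cdot)\le\bar\al$ are essential. Crucially, no rate in $z$ enters the prefactor $e^{I(t)}$, which is why $\theta$ is preserved. A minor point to check is that $A_\infty(\cdot)$ is measurable and integrable on $[0,t]$, which follows from the regularity of $f$ and $s$ together with the uniform bound $A_\infty\le\bar\al$.
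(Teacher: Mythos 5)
Your proof is correct and follows essentially the same route as the paper's: integrate the linear-in-time ODE for the tail $1-F(z,t)$ to get the factor $\exp\bigl(\int_0^t\int_0^z\al(s(x,\tau))f(x,\tau)\,dx\,d\tau\bigr)$, then use monotonicity in $z$ and boundedness of that inner integral to pass to the limit $z\to\infty$, preserving the exponent $\theta$ while the constant becomes time-dependent. You merely fill in details (monotone convergence, strict positivity and finiteness of the limiting prefactor) that the paper leaves implicit.
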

\begin{proof}
Note that we can rewrite equation \re{e:F} as
$$\partial_t [1-F(z,t)] = [1-F(z,t)]G(z,t) \qquad \text{ with }G(z,t):=\int_0^z \alpha(s(x,t)) f(x,t) dx.$$
Then the solution is given by 
$$ 1-F(z,t)= [1-F(z,0)]\exp(\int_0^tG(z,s)\,ds).$$
Multiplication with $z^{-1/\theta}$ yields
$$ [1-F(z,t)]z^{1/\theta}= [1-F(z,0)]z^{1/\theta}\exp(\int_0^tG(z,s)\,ds).$$
For a fixed time $s$ the function $G(z,s)$ is monotonically increasing in $z$ and bounded. Hence we can pass to the limit $z\to \infty$ and deduce that the function $F(z,t)$ also has a Pareto tail
for all time $t \in [0,T]$.
\end{proof}

\begin{lem}
Let assumption \ref{a:pareto} be satisfied. Then the growth parameter $\gamma \in \mathbb{R}$ is given by
\begin{align}\label{e:gamma}
\gamma = \theta \int_0^\infty \alpha(\sigma(y))\phi(y) dy.
\end{align}
\end{lem}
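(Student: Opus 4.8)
The plan is to collapse the stationary profile equation \re{e.phi2} into a single first-order relation and then read off $\gamma$ by matching the power-law exponents dictated by the Pareto condition \ref{a:pareto}. Throughout I would write $\Phi(x):=\int_x^\infty\phi(y)\,dy$ for the rescaled tail mass (so that $\Phi'=-\phi$, $\Phi(0)=1$ and $\Phi(\infty)=0$) and $A(x):=\int_0^x\alpha(\sigma(y))\phi(y)\,dy$ for the cumulative learning rate, with total $G:=A(\infty)=\int_0^\infty\alpha(\sigma(y))\phi(y)\,dy$.

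\emph{Step 1 (a first integral).} First I would show that \re{e.phi2} is equivalent to
\[
\gamma\,x\,\phi(x)=\Phi(x)\,A(x).
\]
This is cleanest by inserting the rescaling \re{e:rescalbgp} into the cumulative form \re{e:F}: one checks that $1-F(z,t)=\Phi(ze^{-\gamma t})$ and $\int_0^z\alpha(s)f\,dx=A(ze^{-\gamma t})$, so that with $x=ze^{-\gamma t}$ the time derivative of $F$ produces the drift $\partial_t F=-\gamma x\phi(x)$, and \re{e:F} reduces to the displayed identity. Equivalently one may integrate \re{e.phi2} directly, noting that $\frac{d}{dx}[\Phi(x)A(x)]=-\phi A+\Phi\,\alpha(\sigma)\phi$ reproduces its right-hand side; the integration constant vanishes because $x\phi(x)\to0$ and $A(0)=0$ at $x=0$.

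\emph{Step 2 (logarithmic derivative and decay exponent).} Using $\phi=-\Phi'$, the first integral becomes the separable relation $\Phi'(x)/\Phi(x)=-A(x)/(\gamma x)$. Since $A(x)\to G$ as $x\to\infty$, I would integrate from a fixed $x_0$ to $x$, splitting $A(t)=G+(A(t)-G)$, to obtain
\[
\ln\Phi(x)=-\frac{G}{\gamma}\ln x+C+o(1),
\]
that is, $\Phi(x)$ decays like $x^{-G/\gamma}$.

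\emph{Step 3 (matching and conclusion).} Comparing with the assumed Pareto decay $\Phi(x)\sim k\,x^{-1/\theta}$ from \re{e.pareto} forces the exponents to agree, $G/\gamma=1/\theta$, which is exactly \re{e:gamma}, namely $\gamma=\theta\int_0^\infty\alpha(\sigma(y))\phi(y)\,dy$. The main obstacle is the justification in Step 2 that the convergence $A(x)\to G$ only affects the multiplicative constant $C$ and not the exponent: one needs the correction integral $\int^x\frac{A(t)-G}{t}\,dt$ to converge as $x\to\infty$. Since $G-A(x)=\int_x^\infty\alpha(\sigma)\phi\,dy\le\bar\alpha\,\Phi(x)=O(x^{-1/\theta})$ by the Pareto tail, the integrand is $O(t^{-1/\theta-1})$ and the correction converges, so the argument closes; making this estimate rigorous (and thereby avoiding having to differentiate the Pareto asymptotics of $\phi$ directly) is the only delicate point.
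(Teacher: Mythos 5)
Your proof is correct, and its Step 1 coincides with the paper's own starting point: inserting the BGP ansatz into the cumulative equation \re{e:F} yields exactly the paper's identity \re{e.Phi_al_x}, which in your tail notation (your $\Phi$ is the paper's $1-\Phi$) reads $\ga x\phi(x)=\Phi(x)A(x)$. Where you genuinely diverge is in how $\ga$ is then extracted. The paper divides \re{e.Phi_al_x} by $x^{-1/\theta}$ and passes to the limit $x\to\infty$ on both sides; on the left-hand side this silently uses $x\phi(x)\sim \f{k}{\theta}x^{-1/\theta}$, i.e.\ the \emph{differentiated} form of the Pareto condition, which does not follow from \re{e.pareto} alone (a tail can satisfy $\int_x^\infty\phi(y)\,dy\sim kx^{-1/\theta}$ while $x\phi(x)/x^{-1/\theta}$ fails to converge). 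You instead integrate the separable relation $\Phi'/\Phi=-A/(\ga x)$, use the bound $G-A(t)\le(\max\al)\,\Phi(t)=O(t^{-1/\theta})$ to show that the correction integral $\int^\infty\frac{G-A(t)}{t}\,dt$ converges, conclude $\Phi(x)\sim Cx^{-G/\ga}$ with $C>0$, and read off $G/\ga=1/\theta$, hence \re{e:gamma}, by matching exponents against \re{e.pareto}. This is a more careful route: it invokes the Pareto hypothesis only through the tail itself, where it is actually assumed, never through its formal derivative, and thereby closes a Tauberian-type gap in the paper's two-line limit passage. The cost is modest --- the convergence estimate in your Step 2, and the mild requirement $x\phi(x)\to0$ as $x\to0$ if one takes your alternative derivation of the first integral directly from \re{e.phi2} --- while the paper's argument is shorter but only formal at the crucial step.
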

\begin{proof}
Recall that $F = \int_0^z f(y,t) dy$ satisfies \re{e:F}. Using that $F(z,t)=\Phi(ze^{-\ga t})$ we deduce that 
\beq
\ga \Phi'(x)x=[1-\Phi(x)]\int_0^x\al(\sg)\phi(y)\,dy.\la{e.Phi_al_x}
\eeq
Dividing this equation by $x^{-\theta}$, passing to the limit $x\to \infty$ and using assumption \ref{a:pareto}, we obtain 
$$-\ga\f 1 \theta k=-k \int_0^\infty \alpha(\sigma(y))\phi(y) dy,$$
and subsequently formula \re{e:gamma}.
\end{proof}

\subsection{Existence}
Next we study the existence of BGP. In the special case of $\al = \alpha_0$ we can show existence of solutions, for
the fully coupled problem we shall provide first results on the existence of $(\gamma, \phi)$ for a given $(v, \sigma)$.

\subsection{Special case: $\al = \alpha_0$}
We reiterate that system \eqref{e:bgp2} decouples in the case of a constant learning function $\alpha = \alpha_0 \in \R^+$. 
Hence we solve equation \re{e.phi2} for $\phi = \phi(z)$ first. Since $\al$ is constant, we have $\sigma=0$ and we can solve the equation \re{e.v2} for $v = v(z)$,
giving us a global solution.

\begin{thm}
Let assumption \ref{a:pareto} be satisfied and $\al = \alpha_0$. Then there exists a unique BGP solution $(\Phi, v, 0)$ and a scaling constant $\gamma$ to \eqref{e:bgp2} given by
\begin{align*}
\gamma =  \alpha_0 \theta \int_{\mathcal{I}} f_0(z) \,dz , \quad\Phi(x) =\f{1}{1+kx^{-1/\theta}}.
\end{align*}
\end{thm}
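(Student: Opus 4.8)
The plan is to exploit the decoupling established for the constant learning function. Since $\al=\alpha_0$, the quantity $\al(\sg)$ appearing in \re{e.v2} no longer depends on $\sg$, so the maximization reduces to maximizing $(1-\sg)x$ over $\sg\in\Xi$; as $x>0$ this forces the optimal strategy $\sg\equiv0$. Consequently \re{e.phi2} and \re{e.v2} decouple and can be solved successively: first for $(\ga,\Phi)$, then for $v$.

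First I would determine $(\ga,\Phi)$. Inserting $\al(\sg)\equiv\alpha_0$ into the identity \re{e.Phi_al_x} derived in the previous lemma and using $\int_0^x\phi=\Phi(x)$, the rescaled cumulative distribution $\Phi$ satisfies the separable logistic ODE
\begin{equation*}
\ga\, x\,\Phi'(x)=\alpha_0\,\Phi(x)\bigl(1-\Phi(x)\bigr).
\end{equation*}
Separating variables and integrating gives $\Phi/(1-\Phi)=C\,x^{\alpha_0/\ga}$ for a free constant $C>0$. To fix $C$ and $\ga$ I would impose the admissibility constraints together with the tail condition: $\Phi$ must be an increasing CDF with $\Phi(0^+)=0$ and $\Phi(\infty)=\int_{\I}f_0\,dz$, and it must obey the Pareto tail \re{e.pareto}. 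Reading off $1-\Phi(x)\sim C^{-1}x^{-\alpha_0/\ga}$ as $x\to\infty$ and comparing with $1-\Phi(x)\sim k\,x^{-1/\theta}$ forces the matching of exponents $\alpha_0/\ga=1/\theta$, hence $\ga=\alpha_0\theta\int_{\I}f_0\,dz$ (consistent with \re{e:gamma}), and of prefactors $C=1/k$. Inverting then yields $\Phi(x)=1/(1+k\,x^{-1/\theta})$. Uniqueness is immediate: the one-parameter family of ODE solutions contains exactly one member compatible with the boundary values and the prescribed decay rate.

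It remains to construct $v$. Substituting $\sg\equiv0$ and the now-known $\phi$ into \re{e.v2} produces the linear integro-differential equation
\begin{equation*}
\ga\, x\,v'(x)=(\ga-r)\,v(x)+x+\alpha_0\int_x^\infty\bigl[v(y)-v(x)\bigr]\phi(y)\,dy.
\end{equation*}
I would prove existence and uniqueness of $v$ in the weighted space $W^\infty$ by a fixed-point argument: freeze the nonlocal term $\int_x^\infty v(y)\phi(y)\,dy$ as a source, solve the resulting first-order linear ODE by an integrating factor, and show that the induced solution map is a contraction on $W^\infty$. The two preceding lemmas supply the required $L^\infty_{1+z}$ bounds, while the Pareto decay of $\phi$ ensures the tail integral is controlled by $\|v\|_{W^\infty}$.

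The main obstacle will be this $v$-equation rather than the explicit computation of $(\ga,\Phi)$. The transport coefficient $\ga x$ degenerates at $x=0$, so one must pin down the correct boundary behaviour there while simultaneously enforcing the linear growth $v(x)=O(1+x)$ at infinity dictated by the source term $x$. Reconciling these two endpoint conditions—so that the integrating-factor solution genuinely lands in $W^\infty$ and the fixed-point map is contractive—is the delicate step; the $\Phi$ part, by contrast, is a routine separable integration pinned down entirely by the Pareto tail.
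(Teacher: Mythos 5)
Your treatment of $(\ga,\Phi)$ is correct and essentially the paper's own argument: with $\al=\al_0$ the maximizer in \re{e.v2} is $\sg\equiv 0$, equation \re{e.Phi_al_x} reduces to the logistic ODE $\ga x\Phi'(x)=\al_0\Phi(x)(1-\Phi(x))$, and the one-parameter solution family $\Phi/(1-\Phi)=Cx^{\al_0/\ga}$ is pinned down by the Pareto tail; your determination of $\ga$ by matching tail exponents is a legitimate substitute for invoking \re{e:gamma}, and both give $\ga=\al_0\theta\int_{\I}f_0\,dz$ since $f_0$ has unit mass.

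The gap is in the $v$-step, which is the only part of the theorem requiring a genuine argument. You propose to freeze the nonlocal term and show the resulting solution map is a contraction on $W^\infty$, but this is a \emph{stationary} problem: unlike the local-in-time results earlier in the paper, there is no time horizon to shrink, so contractivity must come from the coefficients, and nothing in the hypotheses supplies it. Concretely, the difference $e=v_1-v_2$ of two iterates solves
\begin{equation*}
\ga x e'(x)=\bigl[(\ga-r)-\al_0(1-\Phi(x))\bigr]e(x)+\al_0\int_x^\infty\bigl(u_1(y)-u_2(y)\bigr)\phi(y)\,dy,
\end{equation*}
so the natural bound on the Lipschitz constant of your map is of order $\al_0\int_0^\infty(1+y)\phi(y)\,dy/(r-\ga)$ --- and this already presumes $r>\ga$ (which you never impose; otherwise selecting a bounded solution of the frozen ODE is itself problematic) and $\theta<1$ (so that $\phi$ has a finite first moment). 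There is no reason for this constant to be below one: for the paper's own parameter set ($\al_0=0.0849$, $r=0.06$, $\theta=0.5$, $k=0.05$, hence $\ga\approx 0.042$ and $\int y\phi\,dy\approx 0.35$) every such estimate exceeds one, so the iteration you describe cannot be shown to converge by these means. A side issue: the "two preceding lemmas" you cite for $W^\infty$ bounds concern the time-dependent system, not the rescaled stationary one, so they do not apply here.

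What the paper does instead --- and what your proposal is missing --- is a substitution that removes the nonlocal term exactly rather than iterating on it: set $w:=v\phi$ and $W(x):=\int_x^\infty w(y)\,dy$. Then \re{e.v2} with $\sg=0$ becomes the \emph{local} first-order linear ODE \re{e:W}, namely $(\ga-r)W-\ga W'x=H+\al_0\Phi W+K$ with $H'(x)=x\Phi'(x)$, supplemented by $W(\infty)=0$; this is solved explicitly by variation of constants, and $v=w/\phi$ with $w=-W'$ gives existence and uniqueness with no smallness condition whatsoever. To salvage your route you would have to replace "contraction" by an argument that the affine map has a unique fixed point regardless of its Lipschitz constant (e.g. a Fredholm alternative or the explicit resolvent of the frozen ODE), which in effect reproduces the paper's computation.
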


\begin{proof}
\noindent Note that the initial datum $f_0$ uniquely determines the scaling constant $\gamma$ for the BGP in the case of constant learning function $\alpha  = \alpha_0$. 
Since
$$\int_{\mathcal{I}} f_0(z)\,dz=\int_{\mathcal{I}} f(z) \,dz = \int e^{-\ga t}\phi(ze^{-\ga t})\,dz=\int \phi(y)\,dy,$$
we deduce for $\gamma$ that 
$$\ga = \theta \alpha_0\int \phi(y)\,dy= \theta \alpha_0 \int_{\mathcal{I}} f(z,t) \,dz.$$
For $\al = \alpha_0$ we can write equation \eqref{e.phi2} as 
\beq
-\ga \Phi'(x)x=-\al_0[1-\Phi(x)]\Phi(x).\la{e.Phi}
\eeq
We define  $\tilde \Phi := \f {1}{1 -\Phi}$  i.e.  $ \Phi := 1-\f {1}{ \tilde\Phi}$ and obtain
$$x \ti \Phi'=a(\tilde \Phi -1)$$
with $a  = \frac{\alpha_0}{\gamma}$. This equation has the solution
$$\tilde \Phi = bx^a+1 \qquad \text{and}\qquad\Phi=\f{bx^a}{bx^a+1}.$$
Since we look for solutions which have a Pareto tail we can determine the constant $b$ from the Pareto tail assumption \ref{a:pareto} and obtain
\beq 
\Phi=\f{1}{1+kx^{-1/\theta}}.
\eeq
\noindent In the case $\al = \al_0$, the maximum on the RHS of \eqref{e.v2} is attained at $\sg=0$. 
 In order to solve  equation \re{e.v2} with $\phi$ given and $\sg=0$, we consider $w:=v \phi$ which satisfies
\begin{align}\label{e:w} 
rw+\ga w'x=\phi x+\al_0 \phi \int_x^\infty w\,dy - \al_0 w \int^x_0\phi \,dy.
\end{align}
 We define $W:= \int_x^\infty w\,dy$ and rewrite equation \eqref{e:w} as
 \begin{align*}
 & rw+\ga w'x=\Phi'x+\al_0 \Phi' W - \al_0 \Phi w.
\end{align*}
This gives:
\begin{align*} 
 &(\ga-r)W'-\ga (W'x)'=\Phi'x+\al_0 (\Phi W)'.
 \end{align*} 
 With $H'(x)=x\Phi'(x)$, it follows that
\begin{align}\label{e:W} 
 (\ga-r)W-\ga W'x=H+\al_0 \Phi W+K.
\end{align}
 Equation \eqref{e:W} is a first order ODE with $W(\infty)=0$, which can be solved by considering the solution to the homogeneous equation and then using the variation of constants method.
 Hence we obtain a unique solution $w$, and therefore $v=w/\phi$ .
\end{proof}

\noindent For constant $\alpha$ we are able to prove the following results about the stability of BGPs. 

\begin{thm}
Let assumption \ref{a:pareto} be satisfied, $\al = \alpha_0$ and $f$ denote a solution to the original BMFG problem \eqref{e:bmfg1}. Then the rescaled density $\psi = e^{\gamma t} f(x e^{\gamma t}, t)$ converges pointwise to the BGP
solution  $\phi$, given by \eqref{e:rescalbgp}, as $t\to \infty$. 
\end{thm}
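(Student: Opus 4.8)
The plan is to exploit the fact that for $\al=\al_0$ the system decouples: the optimal strategy is $s=0$ and the Boltzmann equation reduces to the scalar logistic ODE \re{e:cdf} for the CDF, which can be solved explicitly at each fixed knowledge level $z$. Writing $G(z,t):=1-F(z,t)$, equation \re{e:cdf} becomes $\p_t G=\al_0 G(1-G)$, whose solution is
\beq
1-F(z,t)=\f{(1-F(z,0))\,e^{\al_0 t}}{1+(1-F(z,0))(e^{\al_0 t}-1)}.
\eeq
Differentiating this exact expression in $z$ (using $\p_z(1-F(z,0))=-f_0(z)$ and the cancellation of the two terms involving $G_z$) yields a closed formula for the density,
\beq
f(z,t)=\f{f_0(z)\,e^{\al_0 t}}{\bigl(1+(1-F(z,0))(e^{\al_0 t}-1)\bigr)^2}.
\eeq
These two identities are the backbone of the argument: they reduce the whole question to the asymptotic analysis of elementary expressions, with no further PDE theory needed.

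The second step is to pass to the self-similar variables. Since $\psi(x,t)=e^{\ga t}f(xe^{\ga t},t)$ has the CDF $\Psi(x,t):=\int_0^x\psi(y,t)\,dy=F(xe^{\ga t},t)$, I would first analyze $\Psi$. Setting $z=xe^{\ga t}$ in the formula for $1-F$ and invoking the Pareto tail \re{e:paretoF}, namely $1-F(z,0)\sim k z^{-1/\theta}$ as $z\to\infty$, gives $1-F(xe^{\ga t},0)\sim k x^{-1/\theta}e^{-\ga t/\theta}$. The crucial cancellation comes from the identity $\ga=\al_0\theta$ (the total mass is one by \ref{a:f0prob}), so that $\ga/\theta=\al_0$ and the factor $e^{-\ga t/\theta}$ exactly balances the growth $e^{\al_0 t}$ appearing in the numerator. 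Letting $t\to\infty$ then yields
\beq
1-\Psi(x,t)\to\f{kx^{-1/\theta}}{1+kx^{-1/\theta}},\qquad\text{hence}\qquad \Psi(x,t)\to\f{1}{1+kx^{-1/\theta}}=\Phi(x),
\eeq
i.e. pointwise convergence of the rescaled CDF to the BGP profile constructed earlier.

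For the density itself I would substitute $z=xe^{\ga t}$ into the formula for $f$ and multiply by $e^{\ga t}$. The same bookkeeping of exponents shows that all $t$-dependent prefactors cancel, provided one controls $f_0(xe^{\ga t})$: if $f_0$ has the differentiated tail $f_0(z)\sim\f{k}{\theta}z^{-1/\theta-1}$, then $e^{\ga t}f_0(xe^{\ga t})e^{\al_0 t}\to\f{k}{\theta}x^{-1/\theta-1}$ while the denominator tends to $(1+kx^{-1/\theta})^2$, giving
\beq
\psi(x,t)\to\f{(k/\theta)\,x^{-1/\theta-1}}{(1+kx^{-1/\theta})^2}=\Phi'(x)=\phi(x).
\eeq

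I expect the main obstacle to be precisely this last transfer from the CDF to the density. Assumption \ref{a:pareto} controls only the tail of $F(\cdot,0)$, whereas the density formula requires the tail of $f_0$ itself, and one cannot differentiate an asymptotic equivalence for free. I would therefore either (i) add a mild regularity hypothesis — for instance that $f_0$ is eventually monotone, which by a Tauberian/monotone-density argument forces $f_0(z)\sim\f{k}{\theta}z^{-1/\theta-1}$ from \re{e:paretoF} — or (ii) state the density convergence under the explicit assumption that $f_0$ has the corresponding regularly varying tail. A secondary technical point is to make the ``$\sim$'' steps rigorous: since $z=xe^{\ga t}\to\infty$ for every fixed $x>0$, the pointwise-in-$x$ limit is clean, but one must check that the error terms in the Pareto asymptotics vanish after the cancellation of the exponentials rather than being amplified by the $e^{\al_0 t}$ factors in numerator and denominator.
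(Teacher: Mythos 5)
Your proposal is correct, but it takes a genuinely different route from the paper's proof. The paper works entirely in the self-similar variables: it derives the PDE $\p_t \Psi - \ga \Psi - \ga x \p_x \Psi = -\al_0[1-\Psi]\Psi$ for the rescaled CDF $\Psi(x,t)=\int_0^x\psi(y,t)\,dy$, then substitutes $\Psi = \bigl(1+kx^{-1/\theta}U\bigr)^{-1}$; using the identity $\ga=\al_0\theta$ from \re{e:gamma}, the logistic term cancels and $U$ satisfies the pure transport equation $\p_t U + \f{\ga}{\theta}\, y\, \p_y U = 0$, whose solution $U(y,t)=U_0(ye^{-\ga t/\theta})$ converges pointwise to $U_0(0^+)=1$, the last identity being exactly the Pareto-tail assumption \ref{a:pareto}; this gives $\Psi\to\Phi$ pointwise. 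You instead solve the logistic ODE \re{e:cdf} explicitly at each fixed $z$ and rescale afterwards; the same cancellation $\ga/\theta=\al_0$ (which requires $\int f_0=1$, i.e.\ assumption \ref{a:f0prob}) appears in your exponent bookkeeping, and your limit computation for $1-\Psi$ is rigorous since $z=xe^{\ga t}\to\infty$ for each fixed $x>0$. The two arguments are of course cousins — your explicit logistic formula is what the paper's substitution produces when integrated along characteristics — but they differ in what they deliver. Your version is more elementary and, importantly, more honest on one point: both your CDF argument and the paper's proof establish only pointwise convergence of the rescaled CDF $\Psi$ to $\Phi$, whereas the theorem asserts pointwise convergence of the \emph{density} $\psi\to\phi$. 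The paper silently stops at the CDF level; you correctly observe that passing to the density requires tail information on $f_0$ itself, which \ref{a:pareto} (a condition on $1-F(\cdot,0)$ only) does not supply, and you propose the right remedies (a monotone-density/Tauberian argument, or the explicit hypothesis $f_0(z)\sim \f{k}{\theta} z^{-1/\theta-1}$). So your approach buys explicit formulas and an actual proof of density convergence under a mild extra assumption, while the paper's transport-equation argument is more structural — it exhibits the mechanism (tail information propagating inward from $x=\infty$ along characteristics) and would remain usable in situations where explicit solvability is lost.
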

\begin{proof}
The function $\psi(x,t)$ solves 
$$\p_t \psi(x,t)-\ga \psi(x,t) -\ga \p_x \psi(x,t) x =\psi(x,t)\int_0^x\al_0 \psi(x,t)\,dy - \al_0\psi(x,t)\int_x^\infty\psi(x,t)\,dy$$
and therefore its primitive $\Psi(x,t):=\int_0^x\psi(y,t)\,dy$ satisfies
\begin{align}\label{e:Phi}
\p_t \Psi(x,t)-\ga \Psi(x,t) -\ga x \p_x \Psi(x,t)  = -\al_0 [1-\Psi(x,t)]\Psi(x,t).
\end{align}
We define $U(x^{-1/\theta},t):=x^{1/\theta} \f 1 k\left(\f 1 \Psi -1\right)$  i.e $ \Psi = \f{1}{1+k x^{-1/\theta}U}$ and deduce
\begin{align*}
&\p_t \Psi =-\Psi^2 k x^{-1/\theta} \p_t U,\\
&\p_x \Psi =\Psi^2 k x^{-1/\theta} \f 1 \theta \left( x^{-1/\theta-1}U+x^{-2/\theta-1} \p_y U\right),\\ 
&\Psi (1-\Psi) =\Psi^2 k x^{-1/\theta}  U.
\end{align*}
Hence we can rewrite equation \eqref{e:Phi}  as
$$-\p_t U - \f \ga \theta U-\f \ga \theta y \p_y U= - \al_0 U.$$
Since $\vi$ has a Pareto-tail, $U$ solves the equation
$$\p_t U +\f \ga \theta y \p_y U= 0 \text{ with } U_0(x)=U(x,0)=1.$$
This equation has the solution $U(y,t)=U_0(e^{-\ga t}y)$, hence $U(y,t)$ converges pointwise to 1 for $t\to \infty$.
\end{proof} 

\subsection{Existence for the general model}
We conclude the section by proving existence of a solution $(\gamma, \phi)$, which has a Pareto tail for a given $(\sigma, v)$.
Existence of the fully coupled system is a challenging problem.
\begin{thm}
Let assumption \ref{a:alpha} hold and $\sigma\in C^1([0,\infty))$ denote a given function, which satisfies
\begin{align*}
\sigma(z) = 1 \text{ for } z \in [0, z_0],~\sigma'(z) \leq 0
.
\end{align*}
Then there exists a $\gamma \in \R^+$ and a  solution $\phi \in L^1([0,\infty))$ to equation \eqref{e.phi2}, which has a Pareto tail.
\end{thm}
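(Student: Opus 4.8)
The plan is to reduce the integro-differential equation \eqref{e.phi2} to a first-order ODE system for the cumulative distribution $\Phi(x)=\int_0^x\phi(y)\,dy$ and the auxiliary mass $A(x)=\int_0^x\al(\sg(y))\phi(y)\,dy$, integrate that system from the singular endpoint $x=0$ out to infinity, and then check that $\phi=\Phi'$ is a probability density with a genuine Pareto tail. (The function $v$ does not enter \eqref{e.phi2}, so only $\sg$ is used.) Integrating \eqref{e.phi2} once, exactly as in the derivation of \eqref{e.Phi_al_x}, gives $\ga x\Phi'=(1-\Phi)A$; since $A'=\al(\sg(x))\Phi'$ this closes into
\begin{align*}
\Phi'(x)=\f{(1-\Phi(x))A(x)}{\ga x},\qquad A'(x)=\f{\al(\sg(x))(1-\Phi(x))A(x)}{\ga x},
\end{align*}
with $\Phi(0)=A(0)=0$. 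Here $\al(\sg(\cdot))$ is a known, nonnegative, nonincreasing function bounded by $\al(1)$, because $\sg$ is nonincreasing and, by \ref{a:alpha}, $\al$ is increasing.

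Because $\sg\equiv1$ on $[0,z_0]$ we have $A=\al(1)\Phi$ there, so on this interval the system collapses to the separable logistic equation $\ga x\Phi'=\al(1)(1-\Phi)\Phi$, which I solve explicitly as $\Phi(x)=bx^{\al(1)/\ga}/(1+bx^{\al(1)/\ga})$ for a free amplitude $b>0$ and any fixed $\ga>0$. This resolves the singular point: it yields a genuine solution with $\Phi(0)=0$, behaviour $\phi(x)\sim b\tfrac{\al(1)}{\ga}x^{\al(1)/\ga-1}$ near the origin (hence integrable), and well-defined Cauchy data $(\Phi(z_0),\al(1)\Phi(z_0))$ at $x=z_0$. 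For $x>z_0$ the system is non-singular, so Picard-Lindel\"of provides a unique local continuation; the a priori bounds $0\le\Phi\le1$ and $0\le A\le\al(1)\Phi\le\al(1)$, together with monotonicity of $\Phi$ and $A$, prevent blow-up and give a global solution on $[z_0,\infty)$.

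It then remains to verify the two qualitative properties. For the total mass I rewrite $\Phi'/(1-\Phi)=A/(\ga x)$ to obtain $1-\Phi(x)=(1-\Phi(z_0))\exp(-\int_{z_0}^x A(y)/(\ga y)\,dy)$; since $A(y)\to A(\infty)=\int_0^\infty\al(\sg)\phi\,dy\in(0,\al(1)]$, with positivity coming from the $[0,z_0]$ contribution, the integral diverges logarithmically and $\Phi(\infty)=1$, so $\phi$ is a probability density and in particular $\phi\in L^1([0,\infty))$. For the Pareto tail I split $\int_{z_0}^x A(y)/(\ga y)\,dy=\tfrac{A(\infty)}{\ga}\ln(x/z_0)-\int_{z_0}^x\tfrac{A(\infty)-A(y)}{\ga y}\,dy$ and note that $A(\infty)-A(y)=\int_y^\infty\al(\sg)\phi\le\al(1)(1-\Phi(y))=O(y^{-A(\infty)/\ga})$, so the correction integral converges. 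Hence $1-\Phi(x)\sim k\,x^{-1/\theta}$ with $1/\theta=A(\infty)/\ga$ and an explicit positive constant $k$, which is precisely \eqref{e.pareto}; the relation $\ga=\theta A(\infty)$ then reproduces \eqref{e:gamma}, and any admissible $\ga>0$, $b>0$ furnishes a solution.

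The delicate point is not the large-$x$ asymptotics but the control of the singular endpoint $x=0$: one must confirm that the explicit logistic branch is the \emph{unique} admissible local behaviour selecting $\Phi(0)=0$ (ruling out spurious branches), that $A(x)/x$ is integrable near the origin so $A$ is genuinely defined, and that the convergence of the correction integral is strong enough to yield the true limit in \eqref{e.pareto} rather than mere asymptotic proportionality. I would settle these by a linearized analysis of the system near $x=0$, verifying that $\Phi\sim b\,x^{\al(1)/\ga}$ is the only bounded, mass-carrying local solution.
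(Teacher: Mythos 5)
Your proof is essentially correct and, in its second half, takes a genuinely different route from the paper's. The first half coincides with the paper: both exploit $\sg\equiv 1$ on $[0,z_0]$ to push the explicit logistic branch $\Phi(x)=bx^{\al(1)/\ga}/(1+bx^{\al(1)/\ga})$ through the singular point $x=0$, and both continue to $[z_0,\infty)$ by Picard--Lindel\"of --- though your first-order system for $(\Phi,A)$ is cleaner than the paper's scalar form \eqref{e.Phi_ode}, which is obtained by integration by parts and carries the memory term $\int_0^x\Phi\,\f{d}{dy}\al(\sg(y))\,dy$. The tail analysis is where you diverge: the paper fixes the exponent $\theta$ in advance, changes variables to $\ti x=x^{-1/\theta}$ with $\Phi=1-K(\ti x)\ti x$, shows $K$ is monotone and bounded so that $\lim_{\ti x\to0}K$ exists, and then must close the argument with a (only sketched) fixed-point iteration between $\Phi$ and $\ga$ to enforce the compatibility relation \eqref{e:gamma}. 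You instead fix $\ga>0$ arbitrarily, use the explicit representation $1-\Phi(x)=(1-\Phi(z_0))\exp\bigl(-\int_{z_0}^x A(y)/(\ga y)\,dy\bigr)$, and read off the exponent $1/\theta=A(\infty)/\ga$ a posteriori, so that \eqref{e:gamma} holds by construction and no iteration is needed. Since the theorem only asserts existence of \emph{some} $\ga$ and \emph{some} Pareto tail, this is legitimate and arguably cleaner; what it does not give is a solution with a \emph{prescribed} $\theta$ (as one would want when matching an initial datum satisfying \ref{a:pareto}) --- for that you would still need the paper's iteration, or a shooting argument in $\ga$. Two small repairs: your intermediate bound $1-\Phi(y)=O(y^{-A(\infty)/\ga})$ is circular as stated, since it presupposes the convergence of the correction integral you are trying to establish; use instead $A(y)\ge A(z_0)=\al(1)\Phi(z_0)>0$, which gives $1-\Phi(y)=O(y^{-A(z_0)/\ga})$ and hence $A(\infty)-A(y)\le\al(1)(1-\Phi(y))=O(y^{-A(z_0)/\ga})$, already enough for $\int^\infty\f{A(\infty)-A(y)}{\ga y}\,dy<\infty$. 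And the ``delicate point'' you flag at the end --- uniqueness of the local branch at $x=0$ --- is not actually needed: for existence it suffices that the logistic branch \emph{is} a solution with $\Phi(0)=0$ and integrable density, which you have verified.
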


\begin{proof}
The function $\sigma$ is non-increasing and equal to $1$ in the interval $[0,  x_0]$, hence $\alpha(\sg(z))= \al(1)$ on $[0,  x_0]$. 
We know from the previous subsection that in the case $\alpha = \alpha_0$ there exists a solution $\Phi = \Phi(x)$, $x \in [0,x_0]$ of
the form
\begin{align}\label{e:bgpphi}
\Phi(z)=\f{bx^a}{bx^a+1},
\end{align}
where $a=\al(1)/\ga$ on the interval $[0,  x_0]$. That way we obtain $\Phi(x_0)$.

We  rewrite \re{e.Phi_al_x} by integration by parts as 
\beq
\ga \Phi'(x)x=[1-\Phi(x)]\left[\Phi(x)\al(\sg(x))-\int_0^x\Phi(y)\f{d}{dy}\al(\sg(y))\,dy\right]\la{e.Phi_ode}
\eeq
and can obtain a solution on $[x_0, \infty)$  starting from $\Phi(x_0)$  using Picard-Lindel\"of.  Next we check that the Pareto-tail condition is satisfied.
To do so we change variables to $\ti x :=x^{-1/\theta}$ and define $\Phi(x)=:1-K(\ti x)\ti x$. The existence of the limit 
$$\lim_{x \to \infty} \f{1-\Phi(x)}{x^{-1/\theta}} $$ is equivalent to the  existence of $\lim_{\ti x \to 0} K(\ti x).$
In this new variable equation \re{e.Phi_al_x} reads as 
 $$K'\ti x=-K \int_0^{\ti x}  \al(\tilde \sg)(K\xi)' \,d\xi,$$
where $\ti \sg$ is $\sg$ transformed in the new variable $\ti x$. Since $(K\ti x)'$ is non-negative we can estimate the integral on the right hand side  
\begin{align*}
&K'\ti x\ge-K \int_0^{\ti x} \bar \al (K\xi)' \,d\xi=-K \bar \al  K\ti x.
\end{align*}
Then it follows that 
$$
-\f{K'}{K^2} \le \bar \al\quad \text{ and therefore } \quad K(0) \le \f{K(\ti x_0)}{1- \bar \al \ti x_0 K(\ti x_0)}.$$ Since $\Phi(x)\to 1$ for $x\to \infty$, we have $\ti x_0 K(\ti x_0)\to0$ for $\ti x_0 \to 0$.
This means that the limit 
$ \lim_{x \to \infty} \f{1-\Phi(x)}{x^{-1/\theta}} $ exists.\\
The proof follows by iterating between $\Phi$ and $\gamma$. For $\theta <1$, the derivative $\Phi'$ is bounded. So we obtain a fixed point $(\ga,\phi)$ satisfying equations  
\eqref{e.phi2} and \re{e:gamma}.
\end{proof}

\section{Numerical simulations}\label{s:numerics}
\noindent In this section we present an iterative scheme to solve \eqref{e:bmfg1} numerically. First we illustrate
the formation of blow-up solutions for the Boltzmann type model \eqref{e:boltz}. Then we compare the numerical solution of our scheme with the
BGP calculated using the code provided by Lucas and Moll. For additional information on the construction of solutions and the numerical solver for the BGP we refer
to \cite{LM2013}. Finally we study the stability of BGP solutions with respect to initial perturbations.

\subsection{Numerical scheme}
We consider a bounded domain $\mathcal{I} = [0, \bar{z}]$, where $\bar{z}$ denotes to the maximum knowledge level.
The spatial discretization corresponds to $N$ logarithmically spaced intervals. The temporal domain is split into equidistant time steps of size $\Delta t$.
 Let $z_i$ denote the $i-th$ grid point and $t_k = k \Delta t$ the k-th time step. 
We set the initial agent distribution to $f(x,0) = f_0(x)$ to 
\begin{align*}
  f_0(x) = \frac{k}{\theta} x^{-\frac{\theta+1}{\theta}}e^{-kx^{\frac{1}{\theta}}}.
\end{align*}
Note that in this case the cumulative distribution function of $F_0(z) = \int_0^z f_0(y) dy$ has a Pareto tail, i.e. assumption \ref{a:pareto} is satisfied.
Let $V(z,T) = 0$ be the terminal condition of the optimal strategy in the iterative solver. Then the numerical results of the full model are based on the following iterative procedure:
\begin{enumerate}
\item Calculate the solution $f = f(z,t)$ of the Boltzmann type equation \eqref{e:boltz} by updating the solution via
an explicit in time discretization:
\begin{align*}
  f(z_i, t_{k+1}) = f(z_i, t_k) + \Delta t &\left[-\alpha(s(z_i,t_k)) f(z_i, t_k) \int_{z_i}^{\bar{z}} f(y,t_k) dy + \right.\\
&\left. f(z_i,t_k) \int_0^{z_i} \alpha(s(y,t_k)) f(y,t_k) dy \right],
\end{align*}
where the integrals on the right hand side are evaluated using the trapezoidal rule.
\item Solve the Hamilton Jacobi Bellman equation \eqref{e:bellmann}, by first calculating the maximum
via the optimality conditions. Then determine the new optimal velocity using the updated density 
distribution $f = f(z,t)$  and the function $s$.\\
In particular  
\begin{align*}
 \max_{s(z) \in \mathcal{S}}\left((1-s) z + \alpha(s) \int_{z}^{\bar{z}} (V(y,t)-V(z,t)) f(y,t) dy\right)
\end{align*}
is given by
\begin{enumerate}
\item $z=0$: Then $s=1$ since $\alpha = \alpha(s)$ is a strictly monotone function.
\item $z=\bar{z}$: Then the integral is equal to zero, hence $s=0$ gives the maximum.
\item $z \in (0,\bar{z})$: Then $s = \min\left((\frac{z}{\alpha_0 nB})^{\frac{1}{n-1}},1\right)$, hence we cut-off $s$ if it lies outside the
interval $[0,1]$.
\end{enumerate}

\item Go to (1) until convergence.
\end{enumerate}

\subsection{Numerical simulations of the Boltzmann type equation for a given $\alpha$}
In our first example we study mass accumulation in the case of compactly supported initial data $f(z,0)$, cf. Lemma \ref{l:deltadirac}. 
We assume that the maximum knowledge level is denoted by $\bar{z} = 1$,
and choose an initial agent distribution of
\begin{align*}
f(z,0) = 
\begin{cases}
2 &\text{ for all } z \leq 0.5 \\
0 &\text{ otherwise}.
\end{cases}
\end{align*}
We set $\alpha = 1-z$, i.e. individuals with the lowest knowledge devote all their time to
'interactions', those with the maximum knowledge do not interact at all.
Figure \ref{f:delta} illustrates the formation of a Dirac at $z=0.5$ for symmetric and non-symmetric meetings. Note that the lines correspond to $f$ at different discrete time steps. Furthermore we observe 
that the formation of the Dirac happens much faster in case of symmetric meetings, i.e. $\beta=1$.
\begin{figure}\label{f:delta}
\begin{center}
\subfigure[$\beta=0$]{\includegraphics[width=0.45 \textwidth]{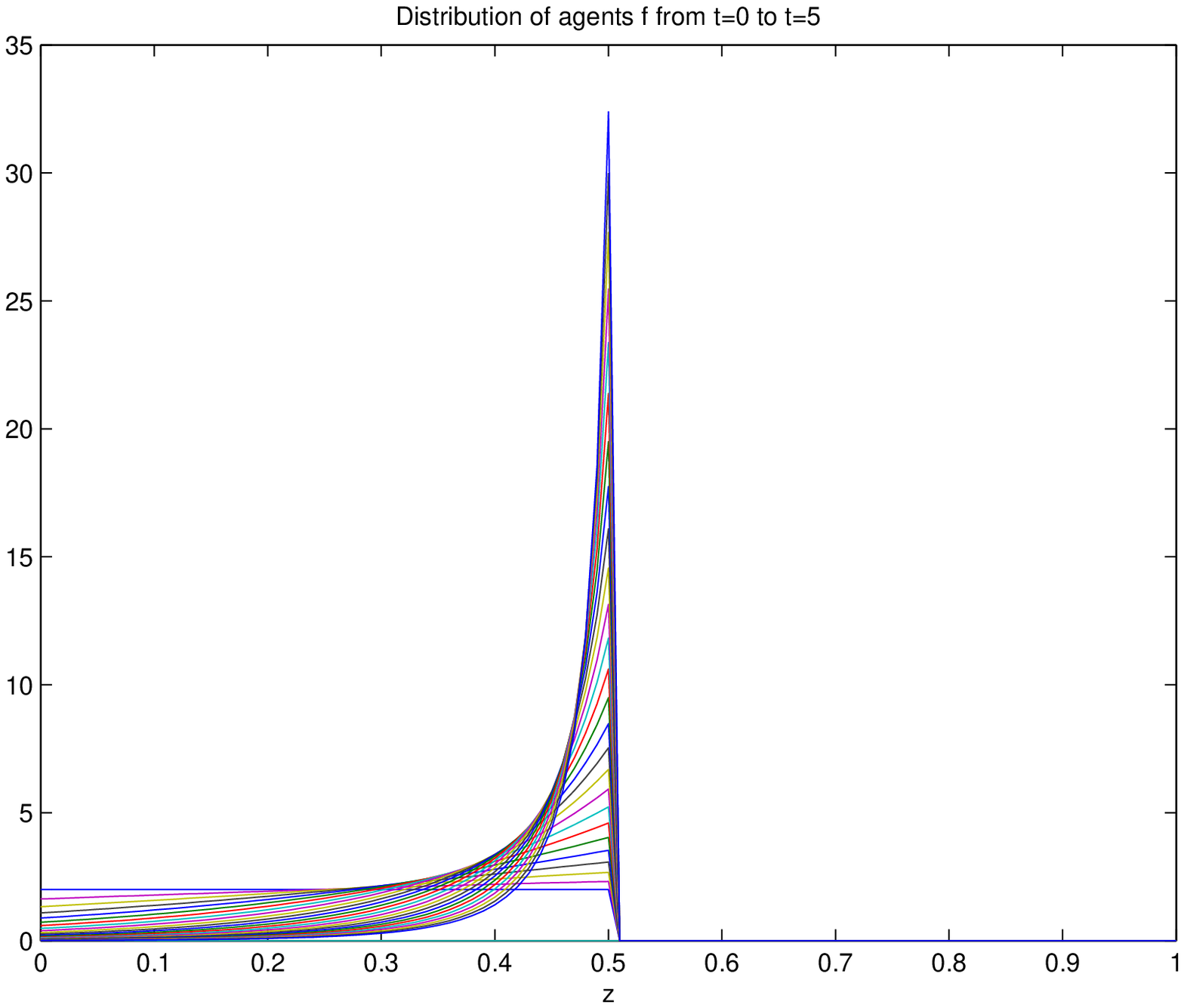}}\hspace*{0.5cm}
\subfigure[$\beta=1$]{\includegraphics[width=0.45 \textwidth]{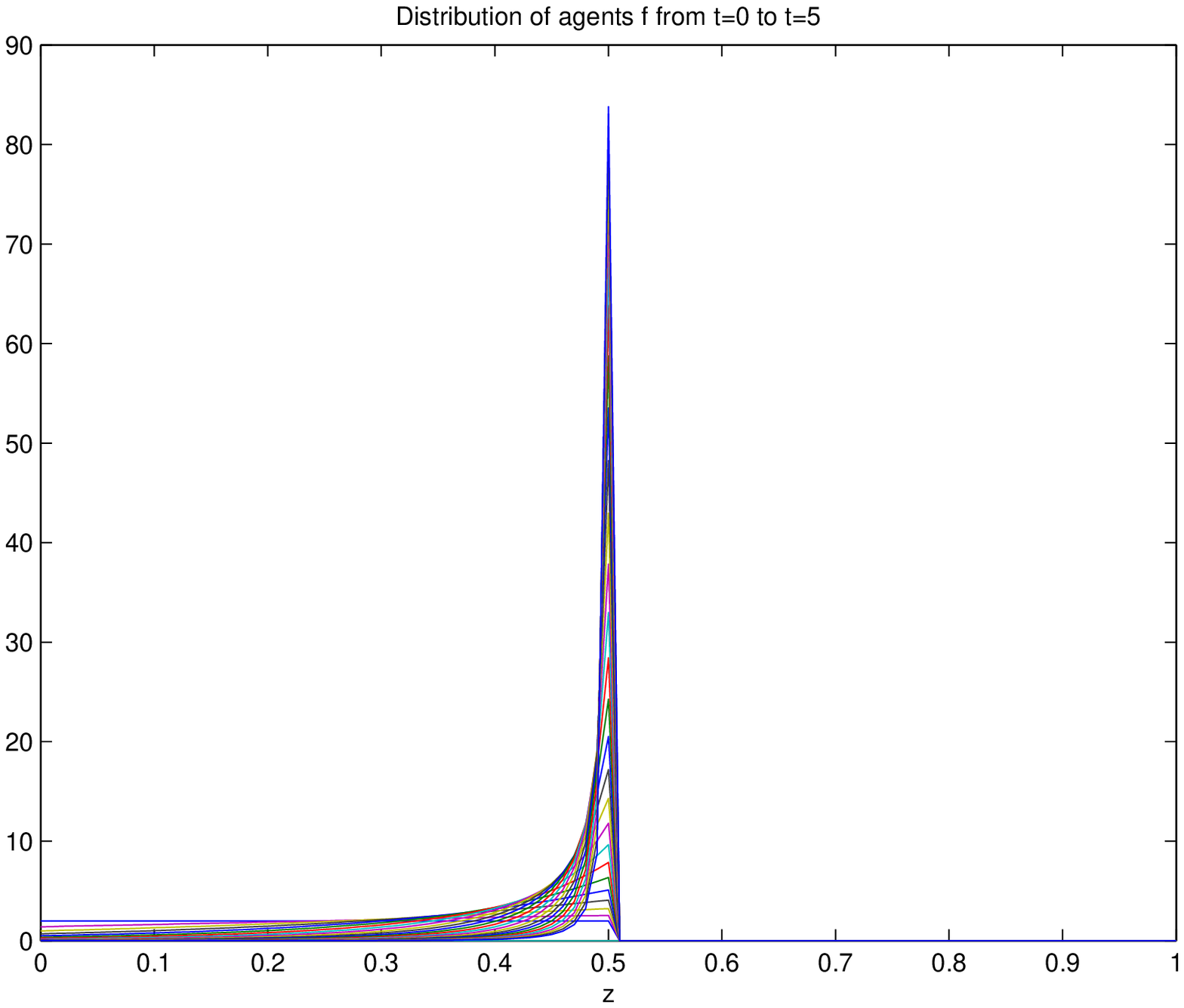}}
\caption{Distribution of agents $f = f(z,t)$ at different times starting at $t=0$ up to $t=5$.}\label{f:delta}

\end{center}
\end{figure}

\subsection{Numerical simulations of the full Boltzmann mean field game model}
Next we compare our results with the numerical simulations of the balanced growth path solutions by Lucas and Moll. 
We choose the same simulation parameters in \eqref{e:alpha} and \eqref{e:paretoF}:
\begin{align*}
\alpha_0 = 0.0849, ~ n = 0.3, ~\theta = 0.5, k = 0.05 \text{ and } r = 0.06.
\end{align*}
To compare the results of the two numerical solvers we fix a sufficiently large final time $T$, i.e. $ t \in [0,200]$ and rescale the computational domain $\mathcal{I}$ 
using the parameter $\gamma$ determined from the BGP simulations of Lucas and Moll. The simulation results for
$k = 400$ time steps and $N = 1001$ discretization points are depicted in Figure \ref{f:trans_vs_bgp}.
\begin{figure}
\begin{center}
\subfigure[Transient vs. BGP]{\includegraphics[width=0.45\textwidth]{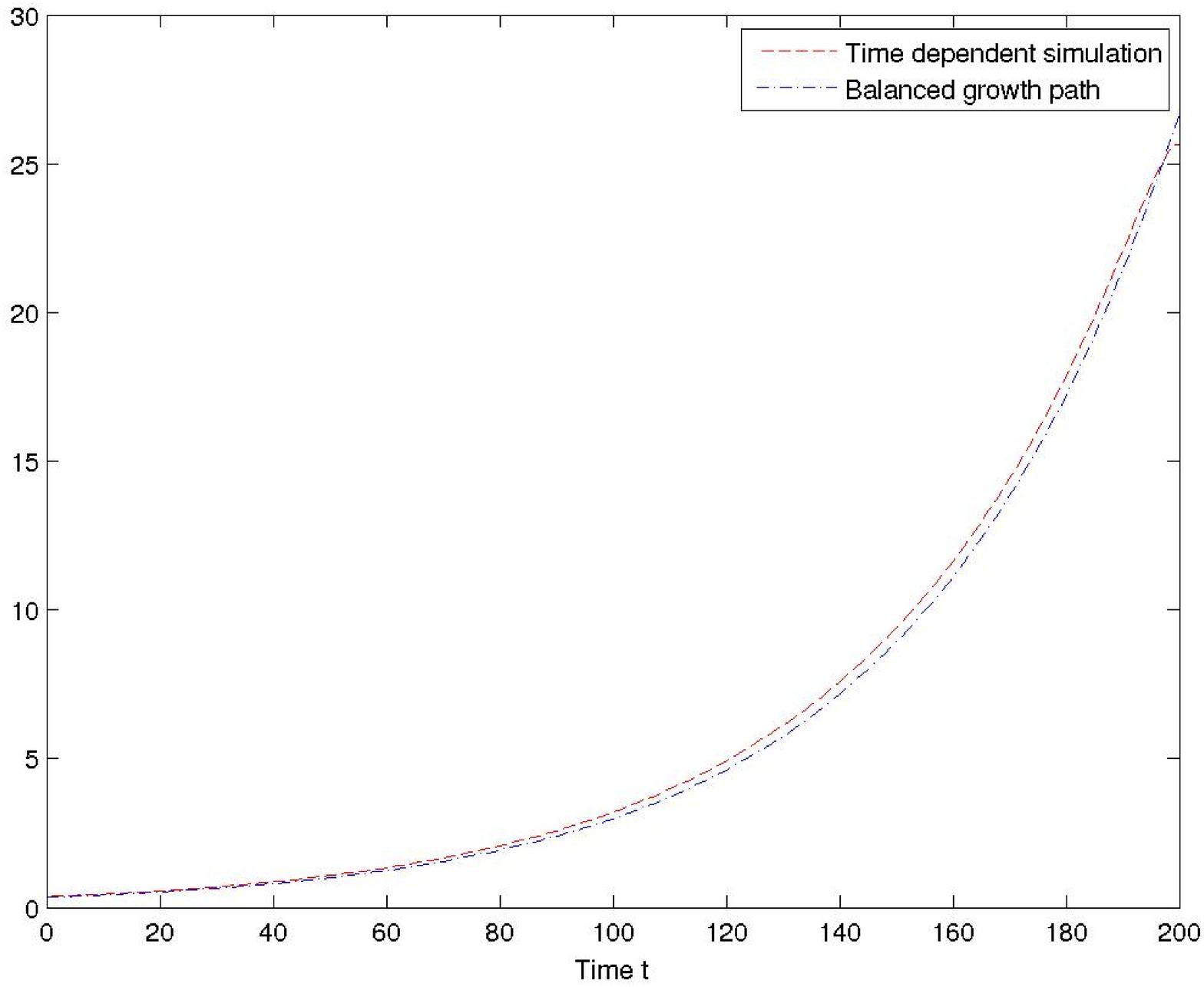}\label{f:trans_vs_bgp}}
\subfigure[Linear growth]{\includegraphics[width=0.45\textwidth]{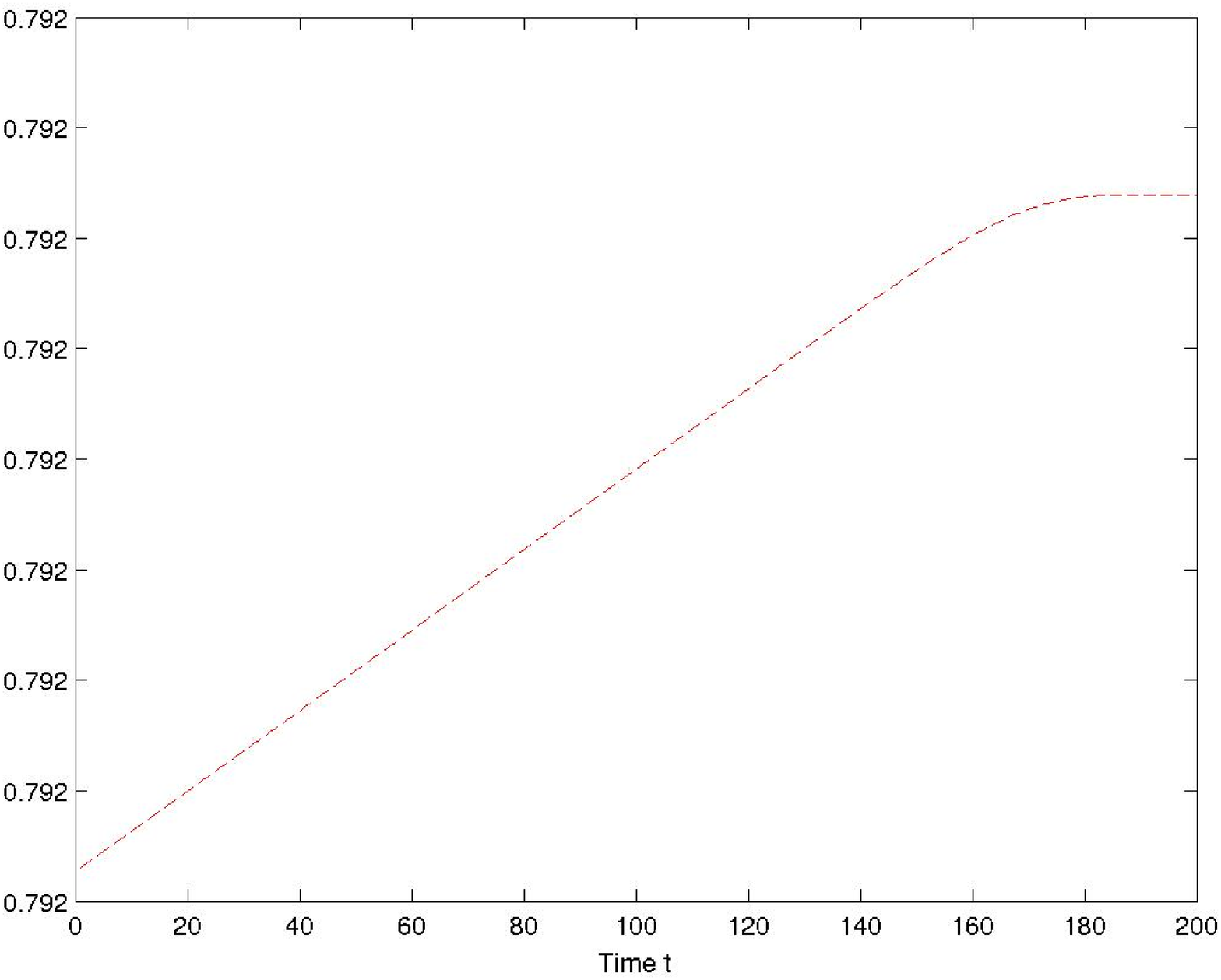}\label{f:theta_small}}
\caption{Evolution of the production function $Y = Y(t)$ in time for different choices of $n$ and $\theta$ }
\end{center}
\end{figure}
Note that we can not expect the existence of BGP solutions in general. If we choose for example
\begin{align*}
\alpha_0 = 0.0849, ~ n = 0.9, ~\theta = 0.1, k = 0.05 \text{ and } r = 0.06,
\end{align*}
i.e. a larger value of $n$ and a smaller value of $\theta$ then the BGP solver of Lucas and Moll is not converging.
The simulation results in Figure \ref{f:trans_vs_bgp} reveal the reason why this is the case. Here the production
function $Y = Y(t)$ is growing linear in time, hence the ansatz proposed by Lucas and Moll is not satisfied. 

\subsection{Stability of balanced growth path}
In our final example we illustrate the stability of balanced growth path with respect to small perturbations of the
initial data. We consider a perturbation of the initial distribution of the form
\begin{align*}
  f_0^p(z) = f_0(z) +  0.1 (1-z) \sin(25 \pi z) \chi_{[0.1,1]},
\end{align*}
which corresponds to a perturbation on the interval $[0.1,1]$ that does not change the overall mass, see Figure \ref{f:per_f0}. Note that the 
initial datum still has a Pareto tail. We set $\alpha_0$, $n$, $k$, $\theta$ and $r$ to the same values as in the previous
example and solve the system on the time interval $t \in [0,250]$ using $500$ equidistant time steps.
Figure \ref{f:bgp} compares the evolution of balanced growth path for the
corresponding unperturbed initial datum, with the transient simulation. We observe that this perturbation does not
change the long time behavior of the production rate. Note that the difference of the two solutions at $t=250$ can be
explained by the terminal condition $V(z,t=250) = 0$ for the transient simulation.

\begin{figure}
\begin{center}
\subfigure[Perturbed initial data]{\includegraphics[width=0.45\textwidth]{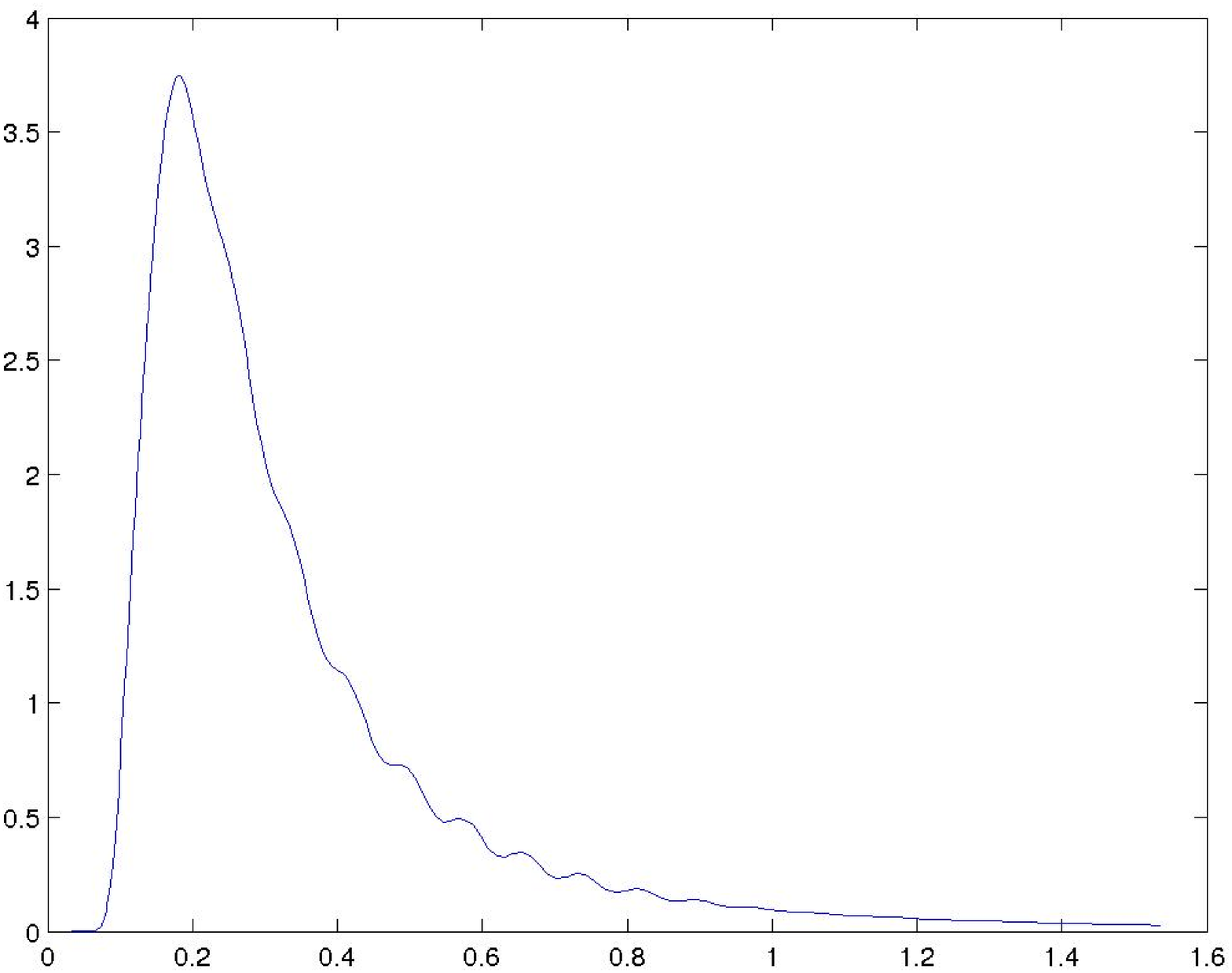}\label{f:per_f0}}
\subfigure[Evolution of the production rate $Y$]{\includegraphics[width=0.45\textwidth]{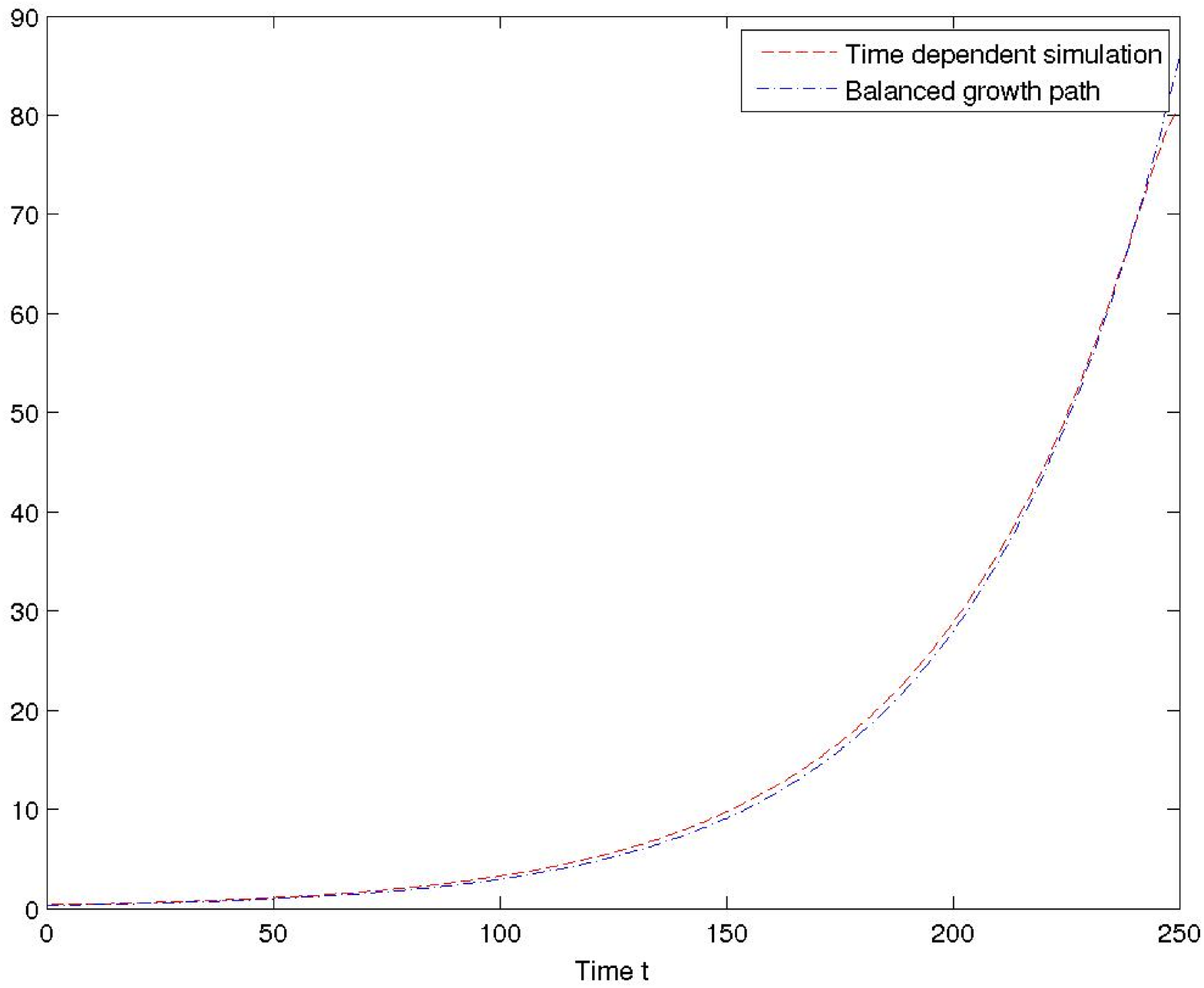}}
\caption{Stability of BGP solutions}\label{f:bgp}
\end{center}
\end{figure}

\section{Conclusion}
\noindent In this paper we present first analytic results as well as numerical simulations for a novel Boltzmann mean field game model for 
knowledge growth proposed by Lucas and Moll \cite{LM2013}. In this model the distribution of individuals with respect to the knowledge
level evolves according to a Boltzmann equation. Collisions correspond to knowledge exchange, the individual interaction rate is determined
by maximizing the individual productivity (given the common knowledge of the distribution of all other agents). This gives to a coupled
system of a Boltzmann equation and a Hamilton-Jacobi-Bellman equation. Knowledge growth is an inherent nature of the model, which is also
reflected in the analytic results. The value function of the HJB equation is growing linearly in $z$, hence we can only provide
local in time existence. Balanced growth path solutions, which also correspond to exponential growth of the production function in time, illustrate
this nature as well - although we are not able to provide existence results of these special solutions in a general situation, we provide
first results on their existence and stability in the case of a special interaction function. \\

\noindent This summary gives indications about several open analytic problems which shall be addressed in the near future, e.g. existence and stability of BGP solutions for the fully coupled system.
Another point of interest corresponds to the generalization of the model. For example by considering a common
noise via an additional diffusion term, more general interaction laws for knowledge growth or knowledge decay caused by forgetting information.

\section*{Acknowledgments}
\noindent MTW acknowledges financial support from the Austrian Academy of Sciences \"OAW via the New Frontiers Group NST-001. 
This research was funded in part by the French ANR blanche project Kibord: ANR-13-BS01-0004.\\
The authors want to thank Benjamin Moll for his support and valuable comments while preparing this manuscript.

\bibliographystyle{abbrv}
\bibliography{mfg_boltzmann}

\end{document}